\newtheorem{theorem}{Theorem}
\newtheorem{remark}[theorem]{Remark}
\newtheorem{proposition}[theorem]{Proposition}
\newtheorem{corollary}[theorem]{Corollary}
\DeclareMathOperator*{\divergenz}{div}              %
\DeclareMathOperator*{\ints}{int}         %
\DeclareMathOperator*{\ww}{w}         %
\DeclareMathOperator*{\Ss}{S}         %
\newcommand{\N}{\mathbb{N}}
\newcommand{\R}{\mathbb{R}}
\newcommand{\RN}{\mathbb{R}^N}
\newcommand{\Lp}[1]{L^{#1}(\Omega)}
\newcommand{\Wpzero}[1]{W^{1,#1}_0(\Omega)}
\newcommand{\lan}{\langle}
\newcommand{\ran}{\rangle}
\newcommand{\eps}{\varepsilon}
\newcommand{\ph}{\varphi}
\newcommand{\Om}{\Omega}
\newcommand{\into}{\int_{\Omega}}
\newcommand{\weak}{\overset{\ww}{\to}}
\newcommand{\Linf}{L^{\infty}(\Omega)}
\newcommand{\close}{\overline{\Omega}}
\newcommand{\interior}{\ints \left(C^1_0(\overline{\Omega})_+\right)}
\newcommand{\cprime}{$'$}
\renewcommand{\l}{\left}
\renewcommand{\r}{\right}
\numberwithin{theorem}{section}
\numberwithin{equation}{section}
\title[Existence/nonexistence of positive solutions for singular equations]{Existence and nonexistence of positive solutions for singular $(p,q)$-equations with superdiffusive perturbation}
\author[N.\,S.\,Papageorgiou]{Nikolaos S.\,Papageorgiou}
\address[N.\,S.\,Papageorgiou]{National Technical University, Department of Mathematics, Zografou Campus, Athens 15780, Greece}
\email{npapg@math.ntua.gr}
\author[P.\,Winkert]{Patrick Winkert}
\address[P.\,Winkert]{Technische Universit\"{a}t Berlin, Institut f\"{u}r Mathematik, Stra\ss e des 17.\,Juni 136, 10623 Berlin, Germany}
\email{winkert@math.tu-berlin.de}
\subjclass[2010]{35J75, 35J92}
\keywords{$(p,q)$-Laplacian, superdiffusive perturbation, positive solutions, nonlinear regularity, truncation and comparison methods}
\begin{document}

\begin{abstract}
	We consider a nonlinear Dirichlet problem driven by the $(p,q)$-Laplacian and with a reaction which is parametric and exhibits the combined effects of a singular term and of a superdiffusive one. We prove an existence and nonexistence result for positive solutions depending on the value of the parameter $\lambda \in \overset{\circ}{\R}_+=(0,+\infty)$.
\end{abstract}

\maketitle

\section{Introduction}

Let $\Omega\subseteq \R^N$ be a bounded domain with a $C^2$-boundary $\partial \Omega$. In this paper, we study the following singular $(p,q)$-equation with logistic perturbation
\begin{equation}\tag{P$_\lambda$}\label{problem}
    \begin{aligned}
	-\Delta_p u-\Delta_q u   & =\lambda \l [u^{-\eta}+u^{\theta-1}\r] -f(x,u)\quad && \text{in } \Omega,\\
	u & = 0  &&\text{on } \partial \Omega,\\
	u>0, \quad \lambda&>0, \quad 0<\eta<1, \quad 1<q<p<\theta.&&
    \end{aligned}
\end{equation}

For $r\in (1,\infty)$ we denote the $r$-Laplace differential operator defined by
\begin{align*}
	\Delta_r u =\divergenz \left(|\nabla u|^{r-2} \nabla u\right)\quad\text{for all }u \in \Wpzero{r}.
\end{align*}

In problem \eqref{problem} we have the sum of two such operators with different exponents which implies that the differential operator on the left-hand side is not homogeneous. The right-hand side of \eqref{problem} has the combined effects of a singular term $s \to \lambda s^{-\eta}$ for $s>0$ with $0<\eta<1$ and of a perturbation which is of logistic type, namely the function $s \to \lambda s^{\theta-1}-f(x,s)$ for almost all (a.\,a.) $x\in\Omega$. The function $f\colon\Omega\times\R\to\R$ is a Carath\'{e}odory function, that is, $x\mapsto f(x,s)$ is measurable for all $s\in \R$ and $s\mapsto f(x,s)$ is continuous for a.\,a.\,$x\in \Omega$. We assume that $f(x,\cdot)$ is $(\theta-1)$-superlinear as $s \to +\infty$ for a.\,a.\,$x\in\Omega$. So, the logistic perturbation is of the superdiffusive type. We are interested in positive solutions whenever the parameter $\lambda$ is positive.

Parametric superdiffusive logistic equations with no singular term present, were investigated by Afrouzi-Brown \cite{1-Afrouzi-Brown-1998} (for semilinear Dirichlet problems), Takeuchi \cite{23-Takeuchi-2001}, \cite{24-Takeuchi-2001b} (for nonlinear Dirichlet problems driven by the $p$-Laplacian), Gasi\'{n}ski-O'Regan-Papageorgiou \cite{3-Gasinski-ORegan-Papageorgiou-2015} (for nonlinear Dirichlet problems driven by a nonhomogeneous differential operator), Cardinali-Papageorgiou-Rubbioni \cite{2-Cardinali-Papageorgiou-Rubbioni-2014}, Gasi\'{n}ski-Papageorgiou \cite{5-Gasinski-Papageorgiou-2017} (both dealing with nonlinear problems driven by the $p$-Laplacian) and Papageorgiou-R\u{a}dulescu-Repov\v{s} \cite{14-Papageorgiou-Radulescu-Repovs-2018} (for semilinear mixed problems). These works reveal that the superdiffusive logistic equations exhbit a kind of ``bifurcation'' for large values of the parameter $\lambda>0$. More precisely, there is a critical parameter value $\lambda_*>0$ such that the problem has at least two positive solutions for all $\lambda>\lambda_*$, the problem hast at least one positive solution for $\lambda=\lambda_*$ and there are no positive solutions for $\lambda \in (0,\lambda_*)$. This is in contrast to subdiffusive and equidiffusive logistic equations for which we do not have multiplicity of positive solutions, see Papageorgiou-Winkert \cite{18-Papageorgiou-Winkert-2014}.

When we introduce a singular term in the reaction, the geometry of the problem changes since $u=0$ is no longer a local minimizer of the energy functional and so we cannot have a multiplicity result. In addition, the singular term generates an energy functional which is not $C^1$ and so we have difficulties in using the results of critical point theory. Therefore, we need to find a way to bypass the singular term and deal with a $C^1$-functional to which we can apply the results of the critical point theory. Nonlinear singular problems but with a different kind of perturbation were studied recently by Papageorgiou-Winkert \cite{20-Papageorgiou-Winkert-2019} (equations driven by the $p$-Laplacian) and by Papageorgiou-R\u{a}dulescu-Repov\v{s} \cite{16-Papageorgiou-Radulescu-Repovs-2020} (equations driven by a nonhomogeneous differential operator).

The main result of our work here establishes the existence of a critical parameter $\lambda_*$ such that
\begin{enumerate}
	\item[$\bullet$]
		problem \eqref{problem} has at least one positive smooth solution for all $\lambda \geq \lambda_*$;
	\item[$\bullet$]
		problem \eqref{problem} has no positive solutions for all $\lambda< \lambda_*$.
\end{enumerate}

Finally we mention that equations driven by the sum of two differential operators of different nature (such as ($p,q$)-equations) arise in many mathematical models of physical processes. We refer to the survey papers of Marano-Mosconi \cite{12-Marano-Mosconi-2018} and R\u{a}dulescu \cite{22-Radulescu-2019}.

\section{Preliminaries and Hypotheses}\label{section_2}

In this section we present some preliminaries which are needed in the sequel and also the hypotheses on the data of problem \eqref{problem}. 

For every $1 \le r< \infty$ we consider the usual Lebesgue spaces $L^r(\Om)$ and $L^r(\Om; \RN)$ equipped with the norm $\|\cdot \|_r$. When $1< r< \infty$ we denote by $W^{1, r}(\Om)$ and $W^{1,r}_0(\Om)$ the corresponding Sobolev spaces  equipped with the norms $\|\cdot \|_{1,r}$ and $\|\cdot \|_{1,r,0}$, respectively. Because of the Poincar\'{e} inequality we can equip the space $\Wpzero{r}$ with the following norm
\begin{align*}
    \|u\|=\|\nabla u\|_r \quad\text{for all }u\in\Wpzero{r},
\end{align*}
The Banach space
\begin{align*}
   C^1_0(\overline{\Omega})= \left\{u \in C^1(\overline{\Omega})\,:\, u\big|_{\partial \Omega}=0 \right\}
\end{align*}
is an ordered Banach space with positive cone
\begin{align*}
   C^1_0(\overline{\Omega})_+=\left\{u \in C^1_0(\overline{\Omega})\,:\, u(x) \geq 0 \text{ for all } x \in \overline{\Omega}\right\}.
\end{align*}
This cone has a nonempty interior given by
\begin{align*}
   \ints \left(C^1_0(\overline{\Omega})_+\right)=\left\{u \in C^1_0(\overline{\Omega})_+: u(x)>0 \text{ for all } x \in \Omega \text{, } \frac{\partial u}{\partial n}(x)<0 \text{ for all } x \in \partial \Omega \right\},
\end{align*}
where $n(\cdot)$ stands for the outward unit normal on $\partial \Omega$.

Let $r\in (1,+\infty)$ and recall that $\Wpzero{r}^*=W^{-1,r'}(\Omega)$ with $\frac{1}{r}+\frac{1}{r'}=1$. By $\lan \cdot,\cdot\ran_{1,r}$ we denote the duality brackets of the pair $(\Wpzero{r},W^{-1,r'}(\Omega))$. For notational simplicity when $r=p$, we simply write $\lan\cdot,\cdot\ran$. 

For $r\in (1,+\infty)$, let $A_r\colon\Wpzero{r}\to W^{-1,r'}(\Omega)=\Wpzero{r}^*$ with $\frac{1}{r}+\frac{1}{r'}=1$ be the nonlinear map defined by
\begin{align}\label{p-Laplace}
    \langle A_r(u), h\rangle_{1,r}=\into |\nabla u|^{r-2}\nabla u \cdot \nabla h\,dx \quad\text{for all }u,h\in\Wpzero{r}.
\end{align}

From Gasi{\'n}ski-Papageorgiou \cite[Problem 2.192, p.\,279]{7-Gasinski-Papageorgiou-2016} we have the following properties of $A_r$.

\begin{proposition}\label{proposition_1}
    The map $A_r\colon\Wpzero{r}\to W^{-1,r'}(\Omega)$ defined in \eqref{p-Laplace} is bounded, that is, it maps bounded sets to bounded sets, continuous, strictly monotone, hence maximal monotone and it is of type $(\Ss)_+$, that is,
    \begin{align*}
	u_n \weak u \text{ in }\Wpzero{r}\quad\text{and}\quad \limsup_{n\to\infty} \langle A_r(u_n),u_n-u\rangle \leq 0,
    \end{align*}
    imply $u_n\to u$ in $\Wpzero{r}$.
\end{proposition}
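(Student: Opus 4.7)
The plan is to verify each asserted property of $A_r$ in turn; all arguments are standard for operators of $p$-Laplace type, so I would move quickly, flagging the only step that needs a nontrivial inequality.

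\textbf{Boundedness and continuity.} For boundedness I would just apply H\"older's inequality to the defining integral to get $\|A_r(u)\|_* \le \|\nabla u\|_r^{r-1}$, which handles bounded sets directly. For continuity, given $u_n \to u$ in $\Wpzero{r}$, I would pass to a subsequence with $\nabla u_n \to \nabla u$ a.e. and dominated (by the $L^r$-bound plus a subsequence extraction \`a la Fatou / Vitali), then conclude $|\nabla u_n|^{r-2}\nabla u_n \to |\nabla u|^{r-2}\nabla u$ in $L^{r'}(\Omega;\R^N)$ and hence $A_r(u_n)\to A_r(u)$ in $W^{-1,r'}(\Omega)$. A subsequence-of-subsequence argument upgrades this to convergence of the full sequence.

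\textbf{Strict monotonicity and maximality.} Here I would rely on the pointwise inequality
\begin{equation*}
\bigl(|\xi|^{r-2}\xi - |\eta|^{r-2}\eta\bigr)\cdot(\xi-\eta) \ge 0 \quad \text{for all } \xi,\eta\in\R^N,
\end{equation*}
with equality iff $\xi=\eta$. Integrating gives $\langle A_r(u)-A_r(v),u-v\rangle_{1,r} \ge 0$ with equality only when $\nabla u=\nabla v$ a.e.\ in $\Om$; the Poincar\'e inequality then forces $u=v$, which is strict monotonicity. Since $A_r$ is everywhere defined, monotone, and continuous (hence demicontinuous) on a reflexive Banach space into its dual, the Br\'ezis--Minty criterion yields maximal monotonicity.

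\textbf{The $(\Ss)_+$ property.} Assume $u_n\weak u$ in $\Wpzero{r}$ and $\lims \langle A_r(u_n),u_n-u\rangle_{1,r}\le 0$. I would first observe that $\langle A_r(u),u_n-u\rangle_{1,r}\to 0$ by weak convergence, so monotonicity gives
\begin{equation*}
0 \le \lims \bigl\langle A_r(u_n)-A_r(u),\, u_n-u\bigr\rangle_{1,r} \le 0,
\end{equation*}
and therefore $\lim \langle A_r(u_n)-A_r(u),u_n-u\rangle_{1,r}=0$. To convert this limit into norm convergence $\nabla u_n\to\nabla u$ in $L^r(\Omega;\R^N)$, I would split into cases according to $r$: for $r\ge 2$ use the standard bound $(|\xi|^{r-2}\xi-|\eta|^{r-2}\eta)\cdot(\xi-\eta)\ge c|\xi-\eta|^r$; for $1<r<2$ use the Simon-type bound $(|\xi|^{r-2}\xi-|\eta|^{r-2}\eta)\cdot(\xi-\eta)\ge c\,|\xi-\eta|^2/(|\xi|+|\eta|)^{2-r}$ followed by H\"older with exponents $2/r$ and $2/(2-r)$ and the $L^r$-boundedness of $\nabla u_n$. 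Either way $\|\nabla(u_n-u)\|_r\to 0$, i.e.\ $u_n\to u$ in $\Wpzero{r}$.

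The main obstacle is the $(\Ss)_+$ verification in the subquadratic range $1<r<2$, where the algebraic inequality is the Simon one and the passage to strong convergence requires the extra H\"older step; everything else is either a direct H\"older estimate, a standard subsequence argument, or Br\'ezis--Minty. Since this is exactly the content cited from Gasi\'nski--Papageorgiou~\cite{7-Gasinski-Papageorgiou-2016}, I would simply reference it rather than reproduce the two-case computation in detail.
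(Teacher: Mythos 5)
Your proposal is correct: the boundedness estimate via H\"older, the Nemytskii-type continuity argument, strict monotonicity from the pointwise inequality plus Poincar\'e, maximal monotonicity via the Minty--Browder criterion, and the $(\Ss)_+$ verification through the two Simon-type inequalities (with the extra H\"older step for $1<r<2$) are exactly the standard proof of this statement. The paper itself gives no proof and simply cites Gasi\'nski--Papageorgiou \cite{7-Gasinski-Papageorgiou-2016}, which is the same reference you defer to, so your sketch just makes explicit the argument behind that citation.
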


For $s \in \R$, we set $s^{\pm}=\max\{\pm s,0\}$ and for $u \in W^{1,p}_0(\Omega)$ we define $u^{\pm}(\cdot)=u(\cdot)^{\pm}$. It is well known that
\begin{align*}
    u^{\pm} \in W^{1,p}_0(\Omega), \quad |u|=u^++u^-, \quad u=u^+-u^-.
\end{align*}

Furthermore, given a measurable function $g\colon \Omega\times\R\to\R$, we denote by $N_g$ the corresponding Nemytskii (superposition) operator defined by
\begin{align*}
	N_g(u)(\cdot)=g(\cdot,u(\cdot))\quad\text{for all  measurable }u\colon \Omega\to \R.
\end{align*}
It is clear that $x\to g(x,u(x))$ is measurable. Recall that if $g\colon\Omega\times\R\to\R$ is a Carath\'eodory function, then $g$ is measurable in both arguments, see, for example, Papageorgiou-Winkert \cite[Proposition 2.2.31, p.\,106]{19-Papageorgiou-Winkert-2018}.

If $h_1,h_2\colon\Omega\to\R$ are two measurable functions, then we write $h_1\prec h_2$ if and only if for every compact $K\subseteq\Omega$ we have $0<c_K\leq h_2(x)-h_1(x)$ for a.\,a.\,$x\in K$.  Note that if $h_1,h_2\in C(\Omega)$ and $h_1(x) < h_2(x)$ for all $x \in \Omega$, then $h_1 \prec h_2$.

For $u,v\in\Wpzero{p}$ with $u(x)\leq v(x)$ for a.\,a.\,$x\in\Omega$ we define
\begin{align*}
    [u,v]&=\big\{h\in\Wpzero{p}: u(x)\leq h(x)\leq v(x)\text{ for a.\,a.\,}x\in\Omega\big\},\\
    [u)&=\big\{h\in\Wpzero{p}: u(x)\leq h(x)\text{ for a.\,a.\,}x\in\Omega\big\}.
\end{align*}

Now we are ready to introduce the hypotheses on the perturbation $f\colon\Omega\times\R\to\R$.
\begin{enumerate}[leftmargin=1.2cm]
	\item[H:]
		$f\colon\Omega \times \R\to \R$ is a Carath\'{e}odory function such that, for a.\,a.\,$x\in\Omega$, $f(x,0)=0$, $f(x,\cdot)$ is nondecreasing and
		\begin{enumerate}[itemsep=0.2cm, topsep=0.2cm]
			\item[(i)]
				\begin{align*}
		    		f(x,s)\leq a(x) \left(1+s^{r-1}\right)
				\end{align*}
				for a.a.\,$x\in\Omega$, for all $s\geq 0$, with $a\in \Linf$ and $\theta<r<p^*$, where $p^*$ denotes the critical Sobolev exponent with respect to $p$ given by
				\begin{align*}
					p^*=
					\begin{cases}
						\frac{Np}{N-p} & \text{if }p<N,\\
						+\infty & \text{if } N \leq p;
		    			\end{cases}
				\end{align*}
			\item[(ii)]
				\begin{align*}
		    		\lim_{s\to +\infty} \frac{f(x,s)}{s^{\theta-1}}=+\infty\quad\text{uniformly for a.\,a.\,}x\in\Omega;
				\end{align*}
			\item[(iii)]
				there exist $0<\hat{\eta}_1 \leq \hat{\eta}_2$ and $\delta_0>0$ such that
				\begin{align*}
		    		\hat{\eta}_1s^{q-1} \leq f(x,s) \quad \text{for a.\,a.\,}x\in \Omega \text{ and for all } s\in [0,\delta_0]
				\end{align*}
				and
				\begin{align*}
					\limsup_{s\to 0^+} \frac{f(x,s)}{s^{q-1}} \leq \hat{\eta}_2\quad\text{uniformly for a.\,a.\,}x\in\Omega.
				\end{align*}
	\end{enumerate}
\end{enumerate}

\begin{remark}
	With view to our problem it is clear that we are looking for positive solutions and the hypotheses above only concern the positive semiaxis $\R_+=[0,+\infty)$. Therefore, without any loss generality, we may assume that
	\begin{align*}
		f(x,s)=0\quad\text{for a.a.\,}x\in \Omega\text{ and for all }s\leq 0.
	\end{align*}
	Hypothesis H(ii) implies that $f(x,\cdot) $ is $(\theta-1)$-superlinear as $s \to +\infty$ for a.\,a.\,$x\in\Omega$. Dropping the $x$-dependence for simplicity, the following functions satisfy hypotheses $H$
	\begin{align*}
		f_1(x)&=
		\begin{cases}
			\l(s^+\r)^{q-1} &\text{if }  s \leq 1,\\
			s^{\theta-1} \l[\ln(x)+1\r] &\text{if } 1<s,
		\end{cases}\quad\text{with } 1<q<p<\theta<p^*,\\[1ex]
		f_2(x)&=
		\begin{cases}
			\mu \l(s^+\r)^{q-1}-\l(s^+\r)^{\tau-1} &\text{if } s \leq 1,\\
			(\mu-1)s^{r-1}&\text{if } 1<s
		\end{cases}\quad\text{with } 1<q<p<r<p^*,
	\end{align*}
	and $\tau>q$ as well as $\mu\geq \frac{p-1}{q-1}$.
\end{remark}

As we already mentioned in the Introduction, the presence of the singular term leads to an energy functional which is not $C^1$. This creates problems in the usage of variational tools. In the next section we examine an auxiliary singular problem and the solution of them will help us in order to avoid difficulties of having to do with a nonsmooth energy functional.

\section{An auxiliary singular problem} 

In this section we deal with the following parametric singular Dirichlet $(p,q)$-equation

\begin{equation}\tag{Q$_\lambda$}\label{problem2}
	\begin{aligned}
		-\Delta_p u-\Delta_q u   & =\lambda u^{-\eta}-f(x,u)\quad && \text{in } \Omega,\\
		u & = 0  &&\text{on } \partial \Omega,\\
		u>0, \quad \lambda&>0, \quad 0<\eta<1, \quad 1<q<p.&&
	\end{aligned}
\end{equation}

For this problem we have the following existence and uniqueness result.

\begin{proposition}\label{proposition_2}
	If hypotheses $H$ hold, then for every $\lambda>0$, problem \eqref{problem2} has a unique  positive solution $\overline{u}_\lambda\in\interior$ and the map $\lambda \to \overline{u}_\lambda$ is nondecreasing from $\overset{\circ}{\R}_+=(0,+\infty)$ into $C^1_0(\close)$.
\end{proposition}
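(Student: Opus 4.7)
The plan is to combine the sub- and super-solution method with the direct method of the calculus of variations, performing a truncation that reduces the singular equation to a non-singular one whose energy functional is $C^1$.

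First I would construct a strictly positive weak subsolution $\underline u\in\interior$ of \eqref{problem2}. Taking $\underline u=\eps\,\hat u_1(q)$, where $\hat u_1(q)\in\interior$ is the $L^\infty$-normalized first eigenfunction of $-\Delta_q$ on $\Wpzero{q}$, hypothesis H(iii) gives $f(x,\eps\,\hat u_1(q))\le(\hat\eta_2+1)(\eps\,\hat u_1(q))^{q-1}$ for all $\eps$ small. Thus the left-hand side of $-\Delta_p\underline u-\Delta_q\underline u+f(x,\underline u)\le\lambda\underline u^{-\eta}$ is of order $\eps^{q-1}$ while the right-hand side is of order $\lambda\eps^{-\eta}\hat u_1(q)^{-\eta}$; since $-\eta<q-1$, for $\eps=\eps(\lambda)>0$ small enough the inequality holds weakly against every $\ph\in\Wpzero{p}$ with $\ph\ge 0$ (the pairing $\into\underline u^{-\eta}\ph\,dx$ being finite via Hardy's inequality applied to $\underline u\succeq c\,d(\cdot,\rand)$).

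With $\underline u$ fixed, I would freeze the singularity below $\underline u$ by setting
\[
\hat g_\lambda(x,s)=\begin{cases}\lambda\underline u(x)^{-\eta}-f(x,\underline u(x))&\text{if }s\le\underline u(x),\\ \lambda s^{-\eta}-f(x,s)&\text{if }s>\underline u(x),\end{cases}
\]
and considering the truncated problem $-\Delta_p u-\Delta_q u=\hat g_\lambda(x,u)$ in $\Om$, $u|_{\rand}=0$. Because $\hat g_\lambda$ is bounded below (after truncation) and has subcritical polynomial growth from above by H(i), its primitive $\hat G_\lambda$ gives a $C^1$ functional
\[
\psi_\lambda(u)=\tfrac{1}{p}\|\nabla u\|_p^p+\tfrac{1}{q}\|\nabla u\|_q^q-\into\hat G_\lambda(x,u)\,dx
\]
on $\Wpzero{p}$, which is coercive (the contribution $-\into\hat G_\lambda$ grows at most linearly in $\|u\|$, whereas the principal part is of order $\|u\|^p$) and sequentially weakly lower semicontinuous. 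The direct method yields a global minimizer $\overline u_\lambda\in\Wpzero{p}$. Testing the Euler equation $\psi_\lambda'(\overline u_\lambda)=0$ with $(\underline u-\overline u_\lambda)^+$ and subtracting the subsolution inequality for $\underline u$ evaluated at the same test function gives
\[
\lan A_p(\underline u)-A_p(\overline u_\lambda)+A_q(\underline u)-A_q(\overline u_\lambda),(\underline u-\overline u_\lambda)^+\ran\le 0,
\]
so Proposition~\ref{proposition_1} (strict monotonicity) forces $\overline u_\lambda\ge\underline u$ a.e. The truncation is therefore inactive and $\overline u_\lambda$ is a weak solution of \eqref{problem2}. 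Nonlinear regularity (Lieberman-type estimates, leveraging H(i)) combined with $\overline u_\lambda\ge\underline u\in\interior$ and the Vazquez maximum principle yields $\overline u_\lambda\in\interior$.

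Finally, for uniqueness, let $u_1,u_2\in\interior$ be two positive solutions; subtracting the two equations and testing with $(u_1-u_2)^+$ gives
\[
\lan A_p(u_1)-A_p(u_2)+A_q(u_1)-A_q(u_2),(u_1-u_2)^+\ran=\into\l[\lambda(u_1^{-\eta}-u_2^{-\eta})-(f(x,u_1)-f(x,u_2))\r](u_1-u_2)^+dx\le 0,
\]
because $s\mapsto s^{-\eta}$ is strictly decreasing and $f(x,\cdot)$ is nondecreasing. Strict monotonicity of $A_p+A_q$ forces $(u_1-u_2)^+=0$, and by symmetry $u_1=u_2$. The analogous comparison with test function $(\overline u_{\lambda_1}-\overline u_{\lambda_2})^+$ for $0<\lambda_1\le\lambda_2$, now using $\lambda_1 s^{-\eta}\le\lambda_2 s^{-\eta}$ on $\{\overline u_{\lambda_1}>\overline u_{\lambda_2}\}$ in tandem with the monotonicity of $f$, gives $\overline u_{\lambda_1}\le\overline u_{\lambda_2}$ and hence the nondecreasing dependence on $\lambda$. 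The main technical obstacle is the subsolution step: verifying that $\underline u=\eps\,\hat u_1(q)$ realizes the weak inequality against every nonnegative $\Wpzero{p}$-test function, which requires the pointwise behavior of $\hat u_1(q)$ near $\rand$ and a Hardy-type estimate so that $\underline u^{-\eta}$ pairs with $\Wpzero{p}$-functions.
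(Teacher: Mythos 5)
Your overall scheme (freeze the singularity below a positive subsolution, minimize the resulting $C^1$ truncated functional, use monotone testing for comparison, uniqueness and the dependence on $\lambda$) would be viable, and in fact your uniqueness and monotonicity-in-$\lambda$ arguments are correct and even a bit more direct than the paper's. The genuine gap is the foundation of the whole construction: the subsolution step. You take $\underline u=\eps\,\hat u_1(q)$ and argue that the left-hand side of $-\Delta_p\underline u-\Delta_q\underline u+f(x,\underline u)\le\lambda\underline u^{-\eta}$ is ``of order $\eps^{q-1}$''. But $\hat u_1(q)$ is an eigenfunction only of the $q$-Laplacian; it is merely $C^{1,\alpha}(\overline\Omega)$, so $-\Delta_p\bigl(\eps\,\hat u_1(q)\bigr)$ is just an element of $W^{-1,p'}(\Omega)$ (the distributional divergence of a bounded vector field), with no pointwise meaning, no sign, and no bound of the form $-\Delta_p\hat u_1(q)\le C\,\hat u_1(q)^{-\eta}$ at your disposal. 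Consequently the required weak inequality, which must hold when tested against \emph{every} nonnegative $\ph\in\Wpzero{p}$, i.e.
\begin{equation*}
\eps^{p-1}\langle A_p(\hat u_1(q)),\ph\rangle+\eps^{q-1}\hat\lambda_1(q)\into \hat u_1(q)^{q-1}\ph\,dx+\into f\bigl(x,\eps\hat u_1(q)\bigr)\ph\,dx\le\lambda\eps^{-\eta}\into \hat u_1(q)^{-\eta}\ph\,dx,
\end{equation*}
cannot be obtained from the order-of-magnitude comparison you give: the first term is not dominated pointwise by anything, and an estimate $|\langle A_p(\hat u_1(q)),\ph\rangle|\le C\|\ph\|$ does not compare with the right-hand side, which is an integral against a weight. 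This trick works for singular equations driven by a single $p$-Laplacian (where $\hat u_1(p)$ makes $-\Delta_p\underline u$ explicit), but it breaks for the $(p,q)$-operator; the Hardy estimate you invoke only settles the finiteness of the pairing $\into\underline u^{-\eta}\ph\,dx$, not the inequality itself.

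Producing a strictly positive lower bound therefore essentially requires solving an auxiliary problem, and this is exactly how the paper proceeds: it regularizes the singularity, solving $-\Delta_p u-\Delta_q u+f(x,u)=\lambda\bigl[|u|+\eps\bigr]^{-\eta}$ for each $\eps>0$ by surjectivity of the maximal monotone, coercive operator $A_p+A_q+N_f$ combined with the Schauder--Tychonov fixed point theorem, shows the family $\hat v_\eps\in\interior$ is unique and nonincreasing in $\eps$, and then lets $\eps\to0^+$, using $\hat v_1\le\hat v_\eps$ together with Hardy's inequality to get the uniform $\Wpzero{p}$-bound and the $(\Ss)_+$-property to pass to the limit; the role of your $\underline u$ is played by $\hat v_1$, which comes with the subsolution-type lower bound for free. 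If you want to keep your truncation-and-minimization architecture, you must first construct such a positive function by an argument of this kind (or quote a result giving a positive solution of a purely singular $(p,q)$-problem); as written, the subsolution inequality is asserted, not proved, and the rest of the proof has nothing to stand on.
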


\begin{proof}
	First we show the existence of a positive solution for problem \eqref{problem2} for every $\lambda>0$.
	
	To this end, let $g \in \Lp{p}$ and $\eps>0$. We consider the following Dirichlet problem 
	\begin{equation*}
		\begin{aligned}
			-\Delta_p u-\Delta_q u +f(x,u)  & = \frac{\lambda}{\l[|g|+\eps\r]^{\eta}}\quad && \text{in } \Omega,\\[1ex]
			u & = 0  &&\text{on } \partial \Omega,
		\end{aligned}
	\end{equation*}

	Moreover, we consider the nonlinear operator $V\colon \Wpzero{p}\to W^{-1,p'}(\Omega)$ defined by
	\begin{align*}
		V(u)=A_p(u)+A_q(u)+N_f(u)\quad\text{for all }u \in \Wpzero{p}.
	\end{align*}
	Recall that $\Wpzero{p} \hookrightarrow \Wpzero{q}$ continuously and densely implies that $W^{-1,q'}(\Omega)$ $\hookrightarrow W^{-1,p'}(\Omega)$ continuously and densely as well, see Gasi\'nski-Papageorgiou \cite[Lemma 2.2.27, p.\,141]{4-Gasinski-Papageorgiou-2006}.

	By Proposition \ref{proposition_1} and the fact that $f(x,\cdot)$ is nondecreasing, we know that $V\colon \Wpzero{p}\to W^{-1,p'(\Omega)}$ is continuous and strictly monotone, hence, maximal monotone as well. In addition we have
	\begin{align*}
		\lan V(u),u\ran \geq \lan A_p(u),u\ran =\|\nabla u\|^p_p=\|u\|^p \quad\text{for all }u\in\Wpzero{p},
	\end{align*} 
	which implies that $V\colon \Wpzero{p}\to W^{-1,p'(\Omega)}$ is also coercive. Therefore, it is surjective, see Papageorgiou-R\u{a}dulescu-Repov\v{s} \cite[Corollary 2.8.7, p.\,135]{15-Papageorgiou-Radulescu-Repovs-2019}. Note that
	\begin{align*}
		\frac{\lambda}{\l[|g(\cdot)|+\eps\r]^{\eta}} \in \Linf \hookrightarrow W^{-1,p'}(\Omega).
	\end{align*}
	Hence, there exists $v_\eps\in\Wpzero{p}$ such that
	\begin{align*}
		V(v_\eps)=\frac{\lambda}{\l[|g|+\eps\r]^{\eta}}.
	\end{align*}
	The strict monotonicity of $V$ implies that this solution $v_\eps$ is unique. Since $\Wpzero{p}$ $\hookrightarrow\Lp{p}$ by the Sobolev embedding theorem, we can define the solution map $k_\eps\colon \Lp{p}$ $\to \Lp{p}$ by $k_\eps(g)=v_\eps$. Note that
	\begin{align}\label{2}
		A_p(v_\eps)+A_q(v_\eps)+N_f(v_\eps)=\frac{\lambda}{\l[|g|+\eps\r]^{\eta}} \quad\text{in } W^{-1,p'}(\Omega).
	\end{align}
	On \eqref{2} we take the test function $v_\eps \in\Wpzero{p}$ and obtain
	\begin{align}\label{3}
		\l\|\nabla v_\eps\r\|_p^p = \l\|v_\eps\r\|^p \leq \frac{\lambda}{\eps^\eta}
	\end{align}
	because $f(x,v_\eps)v_\eps \geq 0$. From the compactness of $\Wpzero{p} \hookrightarrow\Lp{p}$ it follows that
	\begin{align*}
		\overline{k_\eps(\Lp{p})}^{\|\cdot\|_p} \subseteq \Lp{p} \text{ is compact}.
	\end{align*}

	Suppose that $g_n \to g$ in $\Lp{p}$. From \eqref{3} we see that
	\begin{align*}
		\l\{v^n_\eps\r\}_{n\in\N}=\l\{k_\eps(g_n)\r\}_{n\in\N} \subseteq \Wpzero{p} \text{ is bounded}.
	\end{align*}
	Hence, by passing to a suitable subsequence if necessary, we may assume that
	\begin{align}\label{4}
		v^n_\eps \weak v_\eps^* \quad\text{in }\Wpzero{p}\quad\text{and}\quad v^n_\eps \to v^*_n \quad\text{in }\Lp{p}.
	\end{align}
	We have
	\begin{align}\label{5}
		A_p\l(v^n_\eps\r)+A_q\l(v^n_\eps\r)+N_f\l(v^n_\eps\r)=\frac{\lambda}{\l[|g_n|+\eps\r]^{\eta}}\quad\text{in }W^{-1,p'}(\Omega)
	\end{align} 
	for all $n \in \N$. Applying $v^n_\eps-v^*_\eps\in\Wpzero{p}$ on \eqref{5}, passing to the limit as $n\to \infty$ and using \eqref{4}, we obtain
	\begin{align*}
		\lim_{n\to \infty} \l[\l\lan A_p\l(v^n_\eps\r), v^n_\eps-v^*_\eps\r\ran+\l\lan A_q\l(v^n_\eps\r),v^n_\eps-v^*_\eps \r\ran\r]=0.
	\end{align*}
	Since $A_q$ is monotone, we derive
	\begin{align*}
		\limsup_{n\to \infty} \l[\l\lan A_p\l(v^n_\eps\r), v^n_\eps-v^*_\eps\r\ran+\l\lan A_q\l(v^*_\eps\r),v^n_\eps-v^*_\eps \r\ran\r]\leq 0
	\end{align*}
	and due to \eqref{4}, we get
	\begin{align*}
		\limsup_{n\to \infty} \l\lan A_p\l(v^n_\eps\r), v^n_\eps-v^*_\eps\r\ran\leq 0.
	\end{align*}
	Then, by Proposition \ref{proposition_1}, it follows that
	\begin{align}\label{6}
		v^n_\eps \to v_\eps^* \text{ in }\Wpzero{p}.
	\end{align}
	
	So, if we pass in \eqref{5} to the limit as $n\to \infty$ and use \eqref{6} as well as the fact that $|g_n|\to |g|$ in $\Lp{p}$, we obtain
	\begin{align*}
		A_p\l(v^*_\eps\r)+A_q\l(v^*_\eps\r)+N_f\l(v^*_\eps\r)=\frac{\lambda}{\l[|g|+\eps\r]^{\eta}}\quad\text{in }W^{-1,p'}(\Omega).
	\end{align*}
	Hence, $v^*_\eps=k_\eps(g)$.
	
	By the Urysohn's criterion for the convergence of sequences we have for the initial sequence $k_\eps(g_n)\to k_\eps (g)$ in $\Lp{p}$, see Gasi\'nski-Papageorgiou \cite[p.\,33]{6-Gasinski-Papageorgiou-2014}. This proves that the solution map $k_\eps$ is continuous. Therefore, we can apply the Schauder-Tychonov fixed point theorem, see Papageorgiou-R\u{a}dulescu-Repov\v{s} \cite[Theorem 4.3.21, p.\,298]{15-Papageorgiou-Radulescu-Repovs-2019}, which gives the existence of $\hat{v}_\eps\in\Wpzero{p}$ such that
	\begin{align*}
		k_\eps\l(\hat{v}_\eps\r)=\hat{v}_\eps, \quad \hat{v}_\eps \geq 0, \quad \hat{v}_\eps\neq 0.
	\end{align*}

	We have
	\begin{equation*}
	\begin{aligned}
	-\Delta_p \hat{v}_\eps-\Delta_q \hat{v}_\eps & = \frac{\lambda}{\l[\hat{v}_\eps+\eps\r]^{\eta}}-f\l(x,\hat{v}_\eps\r)\quad && \text{in } \Omega,\\[1ex]
	\hat{v}_\eps & = 0  &&\text{on } \partial \Omega.
	\end{aligned}
	\end{equation*}
	Theorem 7.1 of Ladyzhenskaya-Ural'tseva \cite[p.\,286]{9-Ladyzhenskaya-Uraltseva-1968} implies that $\hat{v}_\eps \in \Linf$. Then, from the nonlinear regularity theory of Lieberman \cite{11-Lieberman-1991} we have that $\hat{v}_\eps\in C^1_0(\close)_+\setminus \{0\}$. Hypotheses $H$(i), (iii) imply that if $\rho_\eps=\|\hat{v}_\eps\|_\infty$, then there exists $\hat{\xi}_{\rho_\eps}>0$ such that $\hat{\xi}_{\rho_\eps} s^{p-1}-f(x,s) \geq 0$ for a.\,a.\,$x\in\Omega$ and for all $s\in[0,\rho_\eps]$. Using this we obtain
	\begin{align*}
		-\Delta_p \hat{v}_\eps-\Delta_q \hat{v}_\eps+\hat{\xi}_{\rho_\eps} \hat{v}^{p-1} \geq 
		\hat{\xi}_{\rho_\eps} \hat{v}^{p-1}-f\l(x,\hat{v}_\eps\r) \geq 0 \quad \text{in }\Omega.
	\end{align*}
	Hence, we have
	\begin{align*}
		\Delta_p \hat{v}_\eps+\Delta_q \hat{v}_\eps \leq \hat{\xi}_{\rho_\eps} \hat{v}^{p-1},
	\end{align*}
	which implies that $\hat{v}_\eps \in \interior$, see Pucci-Serrin \cite[pp.\,111 and 120]{21-Pucci-Serrin-2007}.
	
	Therefore, we produced a solution $\hat{v}_\eps \in \interior$ for the following approximation of problem \eqref{problem2}
	\begin{equation}\label{7}
		\begin{aligned}
			-\Delta_p u-\Delta_q u  & = \frac{\lambda}{\l[|u|+\eps\r]^{\eta}}-f(x,u)\quad && \text{in } \Omega,\\[1ex]
			u\big|_{\partial\Omega} & = 0, \quad u\geq 0.  &&
		\end{aligned}
	\end{equation}
	
	In fact this solutions is unique. Indeed, if $\tilde{v}_\eps \in \Wpzero{p}$ is another positive solution of \eqref{7}, then we have
	\begin{align*}
		0 &
		\leq \l\lan A_p\l(\hat{v}_\eps\r)-A_p\l(\tilde{v}_\eps\r), \hat{v}_\eps-\tilde{v}_\eps\r\ran+\l\lan A_q\l(\hat{v}_\eps\r)-A_q\l(\tilde{v}_\eps\r), \hat{v}_\eps-\tilde{v}_\eps\r\ran\\
		& \quad + \into \l[f\l(x,\hat{v}_\eps\r)-f\l(x,\tilde{v}_\eps\r)\r] \l(\hat{v}_\eps-\tilde{v}_\eps\r)\,dx\\
		& = \into \lambda \l[\frac{1}{\l(\hat{v}_\eps+\eps\r)^\eta}-\frac{1}{\l(\tilde{v}_\eps+\eps\r)^\eta}\r] \l(\hat{v}_\eps-\tilde{v}_\eps\r)\,dx \leq 0.
	\end{align*}
	Since $u \to A_p(u)+A_q(u)$ is strictly monotone, see Proposition \ref{proposition_1}, it follows that $\hat{v}_\eps=\tilde{v}_\eps$. This proves the uniqueness of the solution $\hat{v}_\eps \in \interior$ of \eqref{7}.
	
	{\bf Claim: } If $0<\eps'<\eps\leq 1$, then $\hat{v}_\eps \leq \hat{v}_{\eps'}$.
	
	We have
	\begin{align}\label{8}
		-\Delta_p \hat{v}_{\eps'}-\Delta_q \hat{v}_{\eps'}+f\l(x,\hat{v}_{\eps'}\r)=\frac{\lambda}{\l[\hat{v}_{\eps'}+\eps'\r]^\eta}\geq \frac{\lambda}{\l[\hat{v}_{\eps'}+\eps\r]^\eta}\quad\text{in }\Omega.
	\end{align}
	
	Now we introduce the Carath\'eodory function $e_\eps\colon\Omega\times\R\to\R$ defined by
	\begin{align}\label{9}
		e_\eps=
		\begin{cases}
			\frac{\lambda}{\l[s^++\eps\r]^\eta} &\text{if }s\leq \hat{v}_{\eps'}(x),\\[1ex]
			\frac{\lambda}{\l[\hat{v}_{\eps'}(x)+\eps\r]^\eta}&\text{if }\hat{v}_{\eps'}(x)<s.
		\end{cases}
	\end{align}
	We set $E_\eps(x,s)=\int_0^se_\eps(x,t)\,dt$ and consider the $C^1$-functional $\sigma_\eps\colon\Wpzero{p}\to\R$ defined by
	\begin{align*}
		\sigma_\eps(u)=\frac{1}{p} \|\nabla u\|^p_p+\frac{1}{q}\|\nabla u\|^q_q + \into F\l(x,u^+\r)\,dx-\into E_\eps(x,u)\,dx
	\end{align*}
	for all $u\in\Wpzero{p}$. From \eqref{9} and since $F\geq 0$ we see that $\sigma_\eps\colon\Wpzero{p}\to\R$ is coercive and because of the Sobolev embedding theorem it is also sequentially weakly lower semicontinuous. Therefore, by the Weierstra\ss-Tonelli theorem there exists $\tilde{v}_\eps\in\Wpzero{p}$ such that
	'\begin{align*}
		\sigma_\eps\l(\tilde{v}_\eps\r)=\min\l[\sigma_\eps(v)\,:\,v\in\Wpzero{p}\r].
	\end{align*}
	This implies that $\sigma_\eps'\l(\tilde{v}_\eps\r)=0$, that is,
	\begin{align}\label{10}
		\l\lan A_p\l(\tilde{v}_\eps\r) ,h\r\ran+\l\lan A_q\l(\tilde{v}_\eps\r) ,h\r\ran+\into f\l(x,\tilde{v}_\eps\r)h\,dx=\into e_\eps\l(x,\tilde{v}_\eps\r)h\,dx
	\end{align}
	for all $h\in\Wpzero{p}$. Taking $h=-\tilde{v}_\eps^-\in\Wpzero{p}$ as test function in \eqref{10} and applying \eqref{9} we obtain that $\tilde{v}_\eps \geq 0$. Moreover, we can choose $h=\l(\tilde{v}_\eps-\hat{v}_{\eps'}\r)^+\in\Wpzero{p}$. Then, using once again \eqref{9} and also \eqref{8} we infer that $\tilde{v}_\eps \leq \hat{v}_{\eps'}$. So, we have proved that
	\begin{align}\label{11}
		\tilde{v}_\eps \in \l[0,\hat{v}_{\eps'}\r].
	\end{align}
	From \eqref{11}, \eqref{9} and \eqref{10} it follows that
	\begin{equation*}
	\begin{aligned}
		-\Delta_p \tilde{v}_\eps-\Delta_q \tilde{v}_\eps +f\l(x,\tilde{v}_\eps \r)& =\l[\tilde{v}_\eps+\eps\r]^{-\eta} \quad && \text{in } \Omega,\\[1ex]
		\tilde{v}_\eps\big|_{\partial\Omega} & = 0, \quad \tilde{v}_\eps\geq 0.  &&
	\end{aligned}
	\end{equation*}
	It is clear that $\tilde{v}_\eps\neq 0$ and so from the first part of the proof we have $\tilde{v}_\eps=\hat{v}_\eps \in\interior$. Then, due to \eqref{11}, we obtain $\tilde{v}_\eps\leq \tilde{v}_{\eps'}$. This proves the Claim.

	Now we are ready to send $\eps \to 0^+$ in order to produce a solution for problem \eqref{problem2}. So, we consider a sequence $\eps_n\to 0^+$ and set $\hat{v}_n=\hat{v}_{\eps_n}$ for all $n \in \N$. We have
	\begin{align}\label{12}
		\l\lan A_p\l(\hat{v}_n\r),h\r\ran+\l\lan A_q\l(\hat{v}_n\r),h\r\ran+\into f\l(x,\hat{v}_n\r)h\,dx=\into \frac{\lambda h}{\l[\hat{v}_n+\eps_n\r]^\eta}\,dx
	\end{align}
	for all $h \in \Wpzero{p}$. Testing \eqref{12} with $h= \hat{v}_n\in\Wpzero{p}$ and applying the Claim gives
	\begin{align}\label{13}
		\l\|\hat{v}_n\r\|^p=\l\|\nabla \hat{v}_n\r\|^p_p \leq \into \frac{\lambda \hat{v}_n}{\l[\hat{v}_n+\eps_n\r]^\eta}\,dx\leq\into \lambda \hat{v}_n \hat{v}_1^{-\eta}\,dx
	\end{align}
	for all $n\in \N$.
	
	Let $\hat{d}(x)=d(x,\partial\Omega)$ for $x \in \close$. We know that $\hat{d}\in\interior$, see Gilbarg-Trudinger \cite[p.\,355]{8-Gilbarg-Trudinger-1998}. Since $\hat{v}_1\in\interior$, we have
	\begin{align}\label{hardy}
		\begin{split}
			\into \lambda \hat{v}_n \hat{v}_1^{-\eta}\,dx
			& =\into \lambda \hat{v}_1^{1-\eta} \frac{\hat{v}_n}{\hat{v}_1}dx
			\leq \lambda c_1 \into \frac{\hat{v}_n}{\hat{v}_1}\,dx
			\leq \lambda c_2 \into \frac{\hat{v}_n}{\hat{d}}\,dx\\
			&\leq \lambda c_3 \l\|\frac{\hat{v}_n}{\hat{d}}\r\|_p
			\leq \lambda c_4 \l\|\hat{v}_n\r\|
		\end{split}
	\end{align}
	for some $c_1,c_2,c_3,c_4>0$.

	From \eqref{13} and \eqref{hardy} it follows that $\{\hat{v}_n\}\subseteq \Wpzero{p}$ is bounded. Therefore we may assume that
	\begin{align}\label{15}
		\hat{v}_n \weak \overline{u}_\lambda \quad\text{in }\Wpzero{p}\quad\text{and}\quad \hat{v}_n\to \overline{u}_\lambda \quad\text{in } \Lp{r}.
	\end{align}

	Now we choose $h=\hat{v}_n-\overline{u}_\lambda\in\Wpzero{p}$ in \eqref{12}. This yields
	\begin{align*}
		&\l\lan A_p\l(\hat{v}_n\r),\hat{v}_n-\overline{u}_\lambda\r\ran+\l\lan A_q\l(\hat{v}_n\r),\hat{v}_n-\overline{u}_\lambda\r\ran +\into f\l(x,\hat{v}_n\r)\l(\hat{v}_n-\overline{u}_\lambda\r)\,dx \\
		&= \into \frac{\lambda \l[\hat{v}_n-\overline{u}_\lambda\r]}{\l[\hat{v}_n+\eps_n\r]^\eta}\,dx\\
		& \leq \into \lambda \l[\hat{v}_n-\overline{u}_\lambda\r]^{1-\eta}\,dx\\
		& \leq \lambda c_6 \l\|\hat{v}_n-\overline{u}_\lambda \r\|_r\quad \text{for some }c_6>0 \text{ and for all }n \in \N, 
	\end{align*}
	since $\overline{u} \geq 0$. Then, from the convergence properties in \eqref{15}, we conclude that
	\begin{align*}
		\limsup_{n\to\infty}\l[ \l\lan A_p\l(\hat{v}_n\r),\hat{v}_n-\overline{u}_\lambda\r\ran+\l\lan A_q\l(\hat{v}_n\r),\hat{v}_n-\overline{u}_\lambda\r\ran \r]\leq 0.
	\end{align*}
	By the monotonicity of $A_q$ we obtain
	\begin{align*}
		\limsup_{n\to\infty}\l[\l\lan A_p\l(\hat{v}_n\r),\hat{v}_n-\overline{u}_\lambda\r\ran+\l\lan A_q\l(\overline{u}_\lambda\r),\hat{v}_n-\overline{u}_\lambda\r\ran\r] \leq 0.
	\end{align*}
	Therefore,
	\begin{align*}
		\limsup_{n\to\infty} \l\lan A_p\l(\hat{v}_n\r),\hat{v}_n-\overline{u}_\lambda\r\ran\leq 0,
	\end{align*}
	which by Proposition \ref{proposition_1} implies that
	\begin{align}\label{16}
		\hat{v}_n\to \overline{u}_\lambda \quad\text{in }\Wpzero{p}.
	\end{align}
	
	From the Claim we know that $\hat{v}_1 \leq \hat{v}_n$ for all $n \in \N$ and so, $\hat{v}_1\leq \overline{u}_\lambda$. Thus, $\overline{u}_\lambda \neq 0$.
	
	For every $h \in\Wpzero{p}$, since $\hat{v}_1\in\interior$, by Hardy's inequality, we have
	\begin{align*}
		0 \leq \frac{|h(x)|}{\l[\hat{v}_n+\eps_n\r]^{\eta}} \leq |h(x)| \hat{v}_1^{-\eta} \in \Lp{1}\quad\text{ for all }n\in\N.
	\end{align*}
	
	Moreover, we have
	\begin{align*}
		\frac{h(x)}{\l[\hat{v}_n(x)+\eps_n\r]^{\eta}}\to \frac{h(x)}{\overline{u}_\lambda(x)^{\eta}} \quad\text{for a.\,a.\,}x\in\Omega
	\end{align*}
	due to \eqref{15}. Therefore, we can apply the Dominated Convergence Theorem and obtain
	\begin{align}\label{17}
		\into \frac{h}{\l[\hat{v}_n+\eps_n\r]^{\eta}}\,dx\to \into \frac{h}{\overline{u}_\lambda^{\eta}}\quad\text{as }n\to \infty.
	\end{align}
	
	We return to \eqref{12}, pass to the limit as $n\to\infty$ and use \eqref{16} as well as \eqref{17}. We obtain
	\begin{align*}
		\l\lan A_p\l(\overline{u}_\lambda\r),h\r\ran+\l\lan A_q\l(\overline{u}_\lambda\r),h\r\ran+\into f\l(x,\overline{u}_\lambda\r)h\,dx=\into \frac{\lambda h}{\overline{u}_\lambda^\eta}\,dx
	\end{align*}
	for all $h \in \Wpzero{p}$. Hence, $\overline{u}_\lambda$ is a positive solution of \eqref{problem2} for $\lambda>0$.
	
	From Marino-Winkert \cite{13-Marino-Winkert-2020} we have that
	\begin{align*}
		\hat{v}_n\in\Linf \quad\text{and}\quad \l\|\hat{v}_n\r\|_\infty \leq c_7
	\end{align*}
	for some $c_7>0$ and for all $n\in\N$. Then, by hypothesis H(i) we know that
	\begin{align*}
		\l\{N_f\l(\hat{v}_n\r)\r\}_{n\in\N} \subseteq \Linf \text{ is bounded}.
	\end{align*}

	We have
	\begin{align*}
		-\Delta_p \hat{v}_n-\Delta_q\hat{v}_n=\frac{\lambda}{\l[\hat{v}_n
		+\eps_n\r]^\eta}-f\l(x,\hat{v}_n\r) \quad \text{in }\Omega, \quad \hat{v}_n \big|_{\partial\Omega} =0
	\end{align*}
	for all $n \in\N$. 

	Using the nonlinear regularity theory of Lieberman \cite{11-Lieberman-1991}, we have that
	\begin{align*}
		\l\{\hat{v}_n\r\}_{n\in\N} \subseteq C^1_0(\close) \text{ is relatively compact}.
	\end{align*}
	Hence, due to \eqref{16}, we obtain $\hat{v}_n\to \overline{u}_\lambda$ in $C^1_0(\close)$. Since $\hat{v}_1 \leq \overline{u}_\lambda$, we then conclude that $\overline{u}_\lambda\in\interior$.
	
	So, we have proved that for every $\lambda>0$, problem \eqref{problem2} has a solution $\overline{u}_\lambda \in\interior$.
	
	We need to show that this is the unique positive solution of \eqref{problem2}. To this end, let $\overline{v}_\lambda \in \Wpzero{p}$ be another positive solution of \eqref{problem2}. Since $A_p$ and $A_q$ are strictly monotone and $f(x,\cdot)$ is nondecreasing, we have
	\begin{align*}
		0 &
		\leq \l\lan A_p\l(\overline{u}_\lambda\r)-A_p\l(\overline{v}_\lambda\r),\overline{u}_\lambda-\overline{v}_\lambda\r\ran+\l\lan A_p\l(\overline{u}_\lambda\r)-A_q\l(\overline{v}_\lambda\r),\overline{u}_\lambda-\overline{v}_\lambda\r\ran\\
		& \quad +\into \l[f\l(x,\overline{u}_\lambda\r)-f\l(x,\overline{v}_\lambda\r)\r] \l(\overline{u}_\lambda-\overline{v}_\lambda\r)\,dx\\
		& =\into \lambda \l[\frac{1}{\overline{u}_\lambda^\eta}-\frac{1}{\overline{v}_\lambda^\eta}\r] \l(\overline{u}_\lambda-\overline{v}_\lambda\r)\,dx \leq 0.
	\end{align*}
	Therefore, $\overline{u}_\lambda =\overline{v}_\lambda$.
	
	Finally, we are going to show the monotonicity of $\lambda \to \overline{u}_\lambda$. So, let $\lambda<\mu$. We consider the Carath\'eodory function $d_\mu\colon\Omega\times\R\to\R$ defined by
	\begin{align}\label{24}
		d_\mu(x,s)=
		\begin{cases}
			\mu \overline{u}_\lambda(x)^{-\eta}-f\l(x,\overline{u}_\lambda(x)\r)&\text{if } s \leq \overline{u}_\lambda(x),\\
			\mu s^{-\eta}-f(x,s)&\text{if }\overline{u}_\lambda(x)<s.
		\end{cases}
	\end{align}
	We set $D_\mu(x,s)=\int_0^s d_\mu(x,t)\,dt$ and consider the $C^1$-functional $\tau_\mu\colon\Wpzero{p}\to\R$ defined by
	\begin{align*}
		\tau_\mu(u) =\frac{1}{p}\|\nabla u\|_p^p+\frac{1}{q}\|\nabla u\|_q^q-\into D_\mu(x,u)\,dx \quad\text{for all }u \in \Wpzero{p}.
	\end{align*}
	Since $\tau_\mu\colon\Wpzero{p}\to\R$ is coercive, the direct method of the calculus of variations produces $\tilde{u}_\mu\in\Wpzero{p}$ such that
	\begin{align*}
		\tau_\mu\l(\tilde{u}_\mu\r)=\min \l[\tau_\mu(u)\,:\,u\in\Wpzero{p}\r].
	\end{align*}
	From \eqref{24} we see that
	\begin{align*}
		\tilde{u}_\mu \in K_{\tau_{\mu}}=\l\{u \in \Wpzero{p}\,:\,\tau_\mu'(u)=0\r\}\subseteq \l[\overline{u}_\lambda \r) \cap \interior
	\end{align*}
	and
	\begin{align*}
		\tilde{u}_\mu =\overline{u}_\mu \in \interior.
	\end{align*}
	Hence, $\overline{u}_\lambda \leq \overline{u}_\mu$.
\end{proof}

\section{Positive solutions} 

In this section we prove the existence and nonexistence of positive solutions for problem \eqref{problem} as $\lambda$ moves in $\overset{\circ}{\R}_+=(0,+\infty)$.

We introduce the following two sets
\begin{align*}
	\mathcal{L}&=\left\{\lambda>0: \text{problem \eqref{problem} has a positive solution}\right\},\\
	\mathcal{S}_\lambda&=\left\{u: u\text{ is a positive solution of problem \eqref{problem}}\right\}.
\end{align*}

\begin{proposition}\label{proposition_3}
	If hypotheses H hold, then $\overline{u}_\lambda \leq u$ for all $u \in \mathcal{S}_\lambda$.
\end{proposition}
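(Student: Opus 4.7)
The plan is to leverage the fact that every $u \in \mathcal{S}_\lambda$ is a supersolution of the auxiliary problem \eqref{problem2}, since the additional term $\lambda u^{\theta-1}$ on the right-hand side of \eqref{problem} is nonnegative. I will then recycle the approximation-and-truncation scheme used in the Claim inside the proof of Proposition~\ref{proposition_2}, comparing $u$ with the regularized approximants $\hat{v}_\eps$ of $\overline{u}_\lambda$ and letting $\eps\to 0^+$ at the end.

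Fix $u \in \mathcal{S}_\lambda$; the nonlinear regularity theory of Lieberman~\cite{11-Lieberman-1991} gives $u \in \interior$, so $u>0$ pointwise in $\Omega$. For $\eps \in (0,1]$ recall the unique positive solution $\hat v_\eps \in \interior$ of the regularized problem \eqref{7} together with the convergence $\hat v_\eps \to \overline{u}_\lambda$ in $C^1_0(\close)$ produced in the proof of Proposition~\ref{proposition_2}. To obtain the desired inequality it suffices to prove $\hat v_\eps \leq u$ for each such $\eps$. Mimicking the Claim in Proposition~\ref{proposition_2}, but this time truncating from above at $u$, introduce the Carath\'eodory function
\begin{align*}
\tilde e_\eps(x,s) =
\begin{cases}
\lambda\l[s^+ + \eps\r]^{-\eta} & \text{if } s \leq u(x), \\
\lambda\l[u(x) + \eps\r]^{-\eta} & \text{if } u(x) < s,
\end{cases}
\end{align*}
set $\tilde E_\eps(x,s) = \int_0^s \tilde e_\eps(x,t)\,dt$, and consider the $C^1$-functional
\begin{align*}
\tilde\sigma_\eps(v) = \frac{1}{p}\|\nabla v\|_p^p + \frac{1}{q}\|\nabla v\|_q^q + \into F\l(x,v^+\r)\,dx - \into \tilde E_\eps(x,v)\,dx.
\end{align*}
Since $\tilde e_\eps$ is bounded and $F\geq 0$, the functional $\tilde\sigma_\eps$ is coercive and sequentially weakly lower semicontinuous, so it admits a global minimizer $\tilde v_\eps \in \Wpzero{p}$.

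Testing the Euler equation $\tilde\sigma_\eps'(\tilde v_\eps)=0$ with $-\tilde v_\eps^-$ yields $\tilde v_\eps \geq 0$. Testing next with $(\tilde v_\eps - u)^+$ and exploiting the supersolution inequality
\begin{align*}
-\Delta_p u -\Delta_q u + f(x,u) = \lambda u^{-\eta} + \lambda u^{\theta-1} \geq \lambda u^{-\eta} \geq \lambda\l[u+\eps\r]^{-\eta}
\end{align*}
together with the monotonicity of $A_p$, $A_q$ and of $f(x,\cdot)$, the surplus $\lambda u^{\theta-1}>0$ closes the chain of inequalities and forces $\meas\{\tilde v_\eps > u\}=0$, i.e.\ $\tilde v_\eps \leq u$. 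Hence $\tilde v_\eps \in [0,u]$, so $\tilde e_\eps(x,\tilde v_\eps) = \lambda[\tilde v_\eps + \eps]^{-\eta}$ and $\tilde v_\eps$ solves \eqref{7}. A standard check that $\tilde\sigma_\eps(t\varphi)<0$ for small $t>0$ and some $\varphi\in\interior$ (the linear-in-$t$ contribution of the singular term dominates the $t^q$ terms from the gradient and from $F$) shows $\tilde v_\eps\neq 0$, and the uniqueness established in Proposition~\ref{proposition_2} gives $\tilde v_\eps = \hat v_\eps$. Letting $\eps\to 0^+$ and using the $C^1_0(\close)$-convergence then delivers $\overline{u}_\lambda \leq u$. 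The main obstacle is precisely the comparison $\tilde v_\eps \leq u$: one needs the strict positivity of the surplus term $\lambda u^{\theta-1}$ on $\{\tilde v_\eps > u\}$ (where $u>0$) to absorb the error coming from replacing $u^{-\eta}$ by $[u+\eps]^{-\eta}$ and thereby force that set to be null.
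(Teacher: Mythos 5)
Your argument is correct, but it runs along a genuinely different track from the paper's. The paper truncates the singular reaction of \eqref{problem2} from above at $u$ (the function $k_\lambda$ in \eqref{25}), solves the resulting truncated \emph{singular} problem \eqref{26} directly (invoking the reasoning of Proposition \ref{proposition_2} and Proposition 10 of \cite{16-Papageorgiou-Radulescu-Repovs-2020}), compares that solution $\tilde u_\lambda$ with $u$ by testing with $(\tilde u_\lambda-u)^+$, and then identifies $\tilde u_\lambda=\overline u_\lambda$ through the uniqueness of the positive solution of \eqref{problem2}. You perform the same truncation-and-comparison at the level of the $\eps$-regularized problems \eqref{7}: since your truncated reaction $\tilde e_\eps$ is bounded, the minimization is elementary and no auxiliary singular problem needs to be solved; the comparison with $u$ uses exactly the paper's mechanism (test with $(\tilde v_\eps-u)^+$, the supersolution inequality coming from $\lambda u^{\theta-1}\ge 0$, monotonicity of $A_p$, $A_q$ and of $f(x,\cdot)$), the uniqueness statement for \eqref{7} inside Proposition \ref{proposition_2} gives $\tilde v_\eps=\hat v_\eps$, and you pay for the simpler truncated problem with one extra limit passage $\eps\to 0^+$, which is already available from Proposition \ref{proposition_2} (pointwise convergence of $\hat v_{\eps_n}$ to $\overline u_\lambda$ suffices). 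Two small repairs: the opening appeal to Lieberman's regularity to get $u\in\interior$ is unnecessary and, as stated, somewhat circular --- the paper only obtains $\mathcal S_\lambda\subseteq\interior$ in Proposition \ref{proposition_4}, \emph{after} Proposition \ref{proposition_3}, via Giacomoni--Saoudi, precisely because of the singular term --- but your proof never actually uses it, only $u>0$ a.e.\ and the weak formulation of \eqref{problem}; and the ``surplus'' $\lambda u^{\theta-1}$ is not needed to absorb any error from replacing $u^{-\eta}$ by $[u+\eps]^{-\eta}$, since $[u+\eps]^{-\eta}\le u^{-\eta}$ already points the right way --- the chain $\lambda[u+\eps]^{-\eta}\le\lambda u^{-\eta}\le\lambda\bigl(u^{-\eta}+u^{\theta-1}\bigr)$ closes with non-strict inequalities, and $(\tilde v_\eps-u)^+=0$ then follows from the strict monotonicity of the gradient terms alone.
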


\begin{proof}
	Let $u \in \mathcal{S}_\lambda$. We introduce the Carath\'eodory function $k_\lambda\colon\Omega\times \overset{\circ}{\R}_+\to \R$ defined by
	\begin{align}\label{25}
		k_\lambda(x,s)=
		\begin{cases}
			\lambda s^{-\eta}-f(x,s)&\text{if } 0<s\leq u(x),\\
			\lambda u(x)^{-\eta} -f(x,u(x))& \text{if }u(x)<s.
		\end{cases}
	\end{align} 
	We consider the following Dirichlet singular $(p,q)$-equation
	\begin{align}\label{26}
		-\Delta_p u-\Delta_qu=k_\lambda(x,u) \quad\text{in }\Omega, \quad u\big|_{\partial\Omega}=0, \quad u> 0.
	\end{align}
	Reasoning as in the proof of Proposition \ref{proposition_2}, see also Papageorgiou-R\u{a}dulescu-Repov\v{s} \cite[Proposition 10]{16-Papageorgiou-Radulescu-Repovs-2020}, we show that \eqref{26} has a positive solution $\tilde{u}_\lambda\in\interior$. The weak formulation of \eqref{26} is given by
	\begin{align}\label{27}
		\l\lan A_p\l(\tilde{u}_\lambda\r),h\r\ran+\l\lan A_q\l(\tilde{u}_\lambda\r),h\r\ran=\into k_\lambda \l(x,\tilde{u}_\lambda\r)h\,dx\quad\text{for all }u \in \Wpzero{p}.
	\end{align}
	Now, we choose $h=\l(\tilde{u}_\lambda-u\r)^+\in\Wpzero{p}$ as test function in \eqref{27}. Then, by applying \eqref{25}, $u\geq 0$ and $u\in \mathcal{S}_\lambda$, we obtain
	\begin{align*}
		& \l\lan A_p\l(\tilde{u}_\lambda\r),\l(\tilde{u}_\lambda-u\r)^+\r\ran+\l\lan A_q\l(\tilde{u}_\lambda\r),\l(\tilde{u}_\lambda-u\r)^+\r\ran\\
		& =\into \l[\lambda u^{-\eta}-f(x,u)\r]\l(\tilde{u}_\lambda-u\r)^+\,dx\\
		& \leq \l[\lambda \l(u^{-u}+u^{\theta-1}\r)-f(x,u)\r]\l(\tilde{u}_\lambda-u\r)^+\,dx\\
		&=\l\lan A_p\l(u\r),\l(\tilde{u}_\lambda-u\r)^+\r\ran+\l\lan A_q\l(u\r),\l(\tilde{u}_\lambda-u\r)^+\r\ran.
	\end{align*}
	Therefore, $\tilde{u}_\lambda \leq u$ because of the monotonicity of $A_p$ and $A_q$.
	
	Then, from \eqref{25} and Proposition \ref{proposition_2}, it follows that $\tilde{u}_\lambda =\overline{u}_\lambda \in \interior$ and so, $\overline{u}_\lambda \leq u$ for all $u \in \mathcal{S}_\lambda$.
\end{proof}

Next we determine the regularity of the elements of the solution set of $\mathcal{S}_\lambda$.

\begin{proposition}\label{proposition_4}
	If hypotheses H hold, then $\mathcal{S}_\lambda \subseteq \interior$ for all $\lambda>0$.
\end{proposition}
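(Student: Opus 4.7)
The plan is straightforward: combine the comparison bound from Proposition~\ref{proposition_3} with the $L^\infty$-result of Marino-Winkert and Lieberman's $C^{1,\alpha}$-theory, mirroring the end of the proof of Proposition~\ref{proposition_2}. Fix $u\in\mathcal{S}_\lambda$. Proposition~\ref{proposition_3} yields $\overline{u}_\lambda\leq u$ a.e.\ in $\Omega$ with $\overline{u}_\lambda\in\interior$. This is the decisive point: since $\overline{u}_\lambda\in\interior$, there exists $c>0$ such that $\overline{u}_\lambda(x)\geq c\,\hat{d}(x)$ on $\close$, with $\hat{d}(x)=d(x,\rand)$, and therefore $u(x)^{-\eta}\leq c^{-\eta}\hat{d}(x)^{-\eta}$ a.e.\ in $\Omega$. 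Because $0<\eta<1$, the dominating function $\hat{d}^{-\eta}$ lies in $L^s(\Omega)$ for every $s<1/\eta$, so the singular part of the right-hand side of \eqref{problem} is $L^s$-controlled uniformly in terms of $\overline{u}_\lambda$.

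With this control on the singular term and with hypothesis H(i) bounding the remaining contribution $\lambda u^{\theta-1}-f(x,u)$ by a polynomial in $u$ plus $L^\infty$-data, I would invoke the $L^\infty$-bound of Marino-Winkert~\cite{13-Marino-Winkert-2020} (already used in the proof of Proposition~\ref{proposition_2}) to conclude that $u\in\Linf$. Once $u$ is bounded, the nonlinear regularity theory of Lieberman~\cite{11-Lieberman-1991} upgrades this to $u\in C^1_0(\close)_+$.

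To finish, I would promote $u$ into $\interior$ via the strong maximum principle. Setting $\rho=\|u\|_\infty$, hypotheses H(i), (iii) supply $\hat{\xi}_\rho>0$ with $\hat{\xi}_\rho s^{p-1}-f(x,s)\geq 0$ for a.a.\ $x\in\Omega$ and all $s\in[0,\rho]$, so that $\Delta_p u+\Delta_q u\leq \hat{\xi}_\rho u^{p-1}$ in $\Omega$, and Pucci-Serrin~\cite{21-Pucci-Serrin-2007} then places $u$ in $\interior$. The only genuinely delicate point in the argument is handling the singular term $u^{-\eta}$ inside the $L^\infty$-estimate, and the pointwise bound $u\geq\overline{u}_\lambda\in\interior$ from Proposition~\ref{proposition_3} is precisely what removes that obstruction.
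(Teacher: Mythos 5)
Your skeleton follows the paper's proof (comparison with $\overline{u}_\lambda$ via Proposition \ref{proposition_3}, then boundedness and $C^1$-regularity, then the Pucci--Serrin argument with $\hat{\xi}_\rho$ to land in $\interior$), and the last step is identical to the paper's. However, there is a genuine gap at the regularity-up-to-the-boundary step. You propose to get $u\in C^1_0(\close)_+$ by applying Lieberman's nonlinear regularity theory once $u\in\Linf$ is known. But for $u\in\mathcal{S}_\lambda$ the right-hand side of \eqref{problem} contains the \emph{genuinely} singular term $\lambda u^{-\eta}$, and since $u=0$ on $\rand$ this term blows up near the boundary: your own estimate only gives $u^{-\eta}\leq c^{-\eta}\hat{d}^{-\eta}$, which lies in $L^s(\Omega)$ merely for $s<1/\eta$. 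Lieberman's global $C^{1,\alpha}$-theory requires a bounded right-hand side (or an $L^s$-datum with $s>N$, or natural growth in $(u,\nabla u)$), and none of these is guaranteed here because $1/\eta$ can be arbitrarily close to $1$, in particular smaller than $N$. Note that in the proof of Proposition \ref{proposition_2} Lieberman was legitimately applied only because the approximating problems carry the regularized term $\lambda/[\hat{v}_n+\eps_n]^\eta\in\Linf$; that regularization is absent for an arbitrary $u\in\mathcal{S}_\lambda$. This is precisely why the paper instead invokes Theorem B.1 of Giacomoni--Saoudi \cite{Giacomoni-Saoudi-2010}, a regularity result tailored to singular equations, using that $u^{-\eta}\leq\overline{u}_\lambda^{-\eta}$ with $\overline{u}_\lambda\in\interior$ controls the singularity in the admissible (distance-weighted) sense. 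Replacing your Marino--Winkert plus Lieberman step by this (or an equivalent singular-regularity theorem) closes the gap; the remainder of your argument then goes through as in the paper.
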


\begin{proof}
	The result is trivially true if $\mathcal{S}_\lambda =\emptyset$. So, suppose that $\mathcal{S}_\lambda \neq\emptyset$ and let $u \in \mathcal{S}_\lambda$. From Proposition \ref{proposition_3} we know that $\overline{u}_\lambda \leq u$ and so $u^{-\eta} \leq \overline{u}_\lambda^{-\eta}\in\Lp{1}$. Recall that $\hat{v}_1\leq \overline{u}_\lambda$ and $\hat{v}_1^{-\eta} \in \Lp{1}$, see the proof of Proposition \ref{proposition_2}.
	Therefore, using Theorem B.1 of Giacomoni-Saoudi \cite{Giacomoni-Saoudi-2010}, we see that $u \in C^1_0(\close)_+\setminus\{0\}$.

	On account of hypotheses H(i), (ii), if $\rho=\|u\|_\infty$, then we can find $\hat{\xi}_\rho>0$ such that 
	\begin{align*}
		\hat{\xi}_\rho s^{p-1}-f(x,s) \geq 0\quad\text{for a.a.\,}x\in\Omega \text{ and for all }0\leq s\leq \rho.
	\end{align*}
	Using this, we have 
	\begin{align*}
		\Delta_p u +\Delta_q u \leq \hat{\xi}_\rho u^{p-1} \quad \text{in }\Omega.
	\end{align*}
	Then, by Pucci-Serrin \cite[pp.\,111 and 120]{21-Pucci-Serrin-2007}, we derive $u \in \interior$. Hence, $\mathcal{S}_\lambda\subseteq \interior$.
\end{proof}

Next, we are going to prove the nonemptiness of $\mathcal{L}$.

\begin{proposition}
	If hypotheses H hold, then $\mathcal{L}\neq \emptyset$.
\end{proposition}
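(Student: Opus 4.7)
The plan is to show $\lambda\in\mathcal{L}$ for every (sufficiently large) $\lambda>0$ by combining the subsolution produced in Proposition \ref{proposition_2} with a truncation of the singular reaction and a direct variational argument on the resulting $C^1$-functional.

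First, fix $\lambda>0$ and let $\overline{u}_\lambda\in\interior$ be the unique solution of \eqref{problem2} from Proposition \ref{proposition_2}. Since $\lambda\overline{u}_\lambda^{\theta-1}\geq0$,
\[
-\Delta_p\overline{u}_\lambda-\Delta_q\overline{u}_\lambda = \lambda\overline{u}_\lambda^{-\eta}-f(x,\overline{u}_\lambda) \leq \lambda\bigl[\overline{u}_\lambda^{-\eta}+\overline{u}_\lambda^{\theta-1}\bigr]-f(x,\overline{u}_\lambda),
\]
so $\overline{u}_\lambda$ is a weak subsolution of \eqref{problem}. To bypass the non-$C^1$ singular term, I would then consider the Carath\'eodory truncation
\[
\hat{k}_\lambda(x,s) = \begin{cases} \lambda\overline{u}_\lambda(x)^{-\eta}+\lambda\overline{u}_\lambda(x)^{\theta-1}-f(x,\overline{u}_\lambda(x)) & \text{if } s\leq\overline{u}_\lambda(x),\\ \lambda s^{-\eta}+\lambda s^{\theta-1}-f(x,s) & \text{if } s>\overline{u}_\lambda(x), \end{cases}
\]
with primitive $\hat{K}_\lambda(x,s)=\int_0^s\hat{k}_\lambda(x,t)\,dt$. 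Because $\overline{u}_\lambda\in\interior$ gives $\overline{u}_\lambda\geq c\hat{d}$ (with $\hat{d}(x)=d(x,\partial\Omega)$) for some $c>0$, the factor $\overline{u}_\lambda^{-\eta}$ is dominated by $c^{-\eta}\hat{d}^{-\eta}$, which by Hardy's inequality pairs with any $h\in\Wpzero{p}$ to produce an integrable function; therefore the functional
\[
\varphi_\lambda(u) = \frac{1}{p}\|\nabla u\|_p^p+\frac{1}{q}\|\nabla u\|_q^q-\into\hat{K}_\lambda(x,u)\,dx
\]
is well defined and of class $C^1$ on $\Wpzero{p}$.

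Next, I would verify coercivity and sequential weak lower semicontinuity of $\varphi_\lambda$. Coercivity is the delicate point: the contribution of $-\hat{K}_\lambda$ contains a term $-\tfrac{\lambda}{\theta}s^\theta$ with $\theta>p$, so this is precisely where hypothesis H(ii) enters. Using the uniform pointwise bound $F(x,s)\geq\tfrac{2\lambda}{\theta}s^\theta$ for $s$ larger than some $M_\lambda$ together with nonnegativity of $F$ elsewhere yields $\int_\Omega\bigl[F(x,u)-\tfrac{\lambda}{\theta}u^\theta\bigr]\geq -C$; combining this with the sublinear estimate $\int u^{1-\eta}\leq C\|u\|^{1-\eta}$ and the leading $\tfrac{1}{p}\|u\|^p$ from the $p$-gradient, one obtains $\varphi_\lambda(u)\to+\infty$ as $\|u\|\to\infty$. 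Since H(i) gives subcritical growth ($r<p^*$), $\varphi_\lambda$ is sequentially weakly lower semicontinuous, so its infimum is attained at some $\hat{u}_\lambda\in\Wpzero{p}$, which is automatically a critical point.

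Finally, I would show $\hat{u}_\lambda\geq\overline{u}_\lambda$ pointwise. Testing $\varphi_\lambda'(\hat{u}_\lambda)=0$ and the equation for $\overline{u}_\lambda$ from \eqref{problem2} against $(\overline{u}_\lambda-\hat{u}_\lambda)^+\in\Wpzero{p}$, then subtracting, one finds on $\{\hat{u}_\lambda<\overline{u}_\lambda\}$ that the reaction difference is $\hat{k}_\lambda(x,\hat{u}_\lambda)-[\lambda\overline{u}_\lambda^{-\eta}-f(x,\overline{u}_\lambda)]=\lambda\overline{u}_\lambda^{\theta-1}>0$, while the left-hand side is nonpositive thanks to the monotonicity inequality for $A_p+A_q$. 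Both sides being equal forces $(\overline{u}_\lambda-\hat{u}_\lambda)^+\equiv 0$, i.e., $\hat{u}_\lambda\geq\overline{u}_\lambda$ a.e. in $\Omega$. There the truncation $\hat{k}_\lambda$ agrees with the full reaction of \eqref{problem}, so $\hat{u}_\lambda$ is a weak positive solution of \eqref{problem}. Nonlinear regularity (the $L^\infty$ bound of Marino--Winkert \cite{13-Marino-Winkert-2020} followed by Lieberman \cite{11-Lieberman-1991} and the strong maximum principle of Pucci--Serrin \cite{21-Pucci-Serrin-2007}, as in Proposition \ref{proposition_2}) then places $\hat{u}_\lambda\in\interior$, giving $\lambda\in\mathcal{L}$ and hence $\mathcal{L}\neq\emptyset$. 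The main technical obstacle is precisely the coercivity step, where the superlinear piece $\tfrac{\lambda}{\theta}\int u^\theta$ has to be absorbed into $\int F(x,u)$ uniformly in $\|u\|$ via H(ii).
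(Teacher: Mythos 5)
Your proposal follows the same skeleton as the paper's proof: take the solution $\overline{u}_\lambda\in\interior$ of the auxiliary problem \eqref{problem2} as a lower barrier, truncate the reaction of \eqref{problem} at $\overline{u}_\lambda$, minimize the resulting coercive and sequentially weakly lower semicontinuous $C^1$-functional, push the minimizer above $\overline{u}_\lambda$ by testing with $(\overline{u}_\lambda-\hat{u}_\lambda)^+$ and invoking the monotonicity of $A_p+A_q$, and finish with the $L^\infty$--Lieberman--Pucci--Serrin regularity chain. The genuine difference is in the truncation and, as a consequence, in the role of the parameter. The paper's truncation \eqref{29} keeps the term $\lambda (s^+)^{\theta-1}$ active in the lower branch, and so the authors add a separate step proving the global minimizer is nontrivial (the estimate $\gamma_\lambda(tu)\leq c_{10}t^{q}-\overline{\mu}\,t<0$ for small $t$), which is exactly where the restriction $\lambda>\lambda_0=\|\overline{u}_\lambda^{\eta}f(\cdot,\overline{u}_\lambda)\|_\infty$ enters; they conclude only $(\lambda_0,+\infty)\subseteq\mathcal{L}$, which of course suffices. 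You instead freeze the entire reaction below $\overline{u}_\lambda$, so on $\{\hat{u}_\lambda<\overline{u}_\lambda\}$ your truncated reaction exceeds the reaction satisfied by $\overline{u}_\lambda$ by exactly $\lambda\overline{u}_\lambda^{\theta-1}>0$, and the comparison alone yields $\hat{u}_\lambda\geq\overline{u}_\lambda$, hence nontriviality for free and no largeness condition on $\lambda$; this is a legitimate streamlining, since the comparison step (in both versions) never uses $\lambda>\lambda_0$. Two cautions. First, as written your argument claims that \emph{every} $\lambda>0$ is admissible, which is more than the proposition requires and stands in direct tension with the paper's later assertion that $\lambda_*=\inf\mathcal{L}>0$; since only $\mathcal{L}\neq\emptyset$ is at stake here, you should either restrict the stated conclusion to large $\lambda$ (as your opening parenthesis suggests) or explicitly flag this discrepancy rather than let it pass silently. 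Second, two technical points deserve precise justification: the $C^1$-property of $\varphi_\lambda$ is not merely well-definedness via Hardy's inequality but the standard differentiability result for truncated singular functionals (the paper cites Papageorgiou--Smyrlis \cite{17-Papageorgiou-Smyrlis-2015}), and your coercivity bound should be stated as $F(x,s)\geq\frac{2\lambda}{\theta}s^{\theta}-c_\lambda$ for all $s\geq 0$ (obtained by integrating $f(x,s)\geq 2\lambda s^{\theta-1}$ from the superlinearity threshold of H(ii)), which is all that is needed to get $\int_{\Omega}\bigl[F(x,u)-\frac{\lambda}{\theta}(u^{+})^{\theta}\bigr]dx\geq -C$ and hence $\varphi_\lambda(u)\geq\frac{1}{p}\|u\|^{p}-c\|u\|^{1-\eta}-c'\|u\|-C'$.
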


\begin{proof}
	Let $\overline{u}_\lambda\in\interior$ be the unique positive solution of \eqref{problem2}, see Proposition \ref{proposition_2}. We introduce the Carath\'eodory function $e_\lambda \colon\Omega \times \R\to\R$ defined by
	\begin{align}\label{29}
		e_\lambda (x,s)=
		\begin{cases}
			\lambda \overline{u}_\lambda (x)^{-\eta}-f\l(x,\overline{u}_\lambda(x)\r)+\lambda \l(s^+\r)^{\theta-1}&\text{if } s \leq \overline{u}_\lambda(x),\\
			\lambda s^{-\eta} -f(x,s)+\lambda s^{\theta-1}&\text{if }\overline{u}_\lambda(x)<s.
		\end{cases}
	\end{align}
	We set $E_\lambda(x,s)=\int^s_0e_\lambda(x,t)\,dt$ and consider the functional $\gamma_\lambda\colon\Wpzero{p}\to\R$ defined by
	\begin{align*}
		\gamma_\lambda(u)=\frac{1}{p}\|\nabla u\|_p^p+\frac{1}{q}\|\nabla u\|_q^q-\into E_\lambda(x,u)\,dx\quad\text{for all }u\in\Wpzero{p}.
	\end{align*}
	Since $\overline{u}_\lambda^{-\eta}\in \Lp{s}$ for $s>N$, see the proof of Proposition \ref{proposition_2}, we have that $\gamma_\lambda \in C^1(\Wpzero{p})$, see also Proposition 3 of Papageorgiou-Smyrlis \cite{17-Papageorgiou-Smyrlis-2015}.
	
	From \eqref{29} and hypothesis H(ii), we infer that $\gamma_\lambda$ is coercive. Moreover, it is also sequentially weakly lower semicontinuous. Hence, there exists a global minimizer $u^\circ_\lambda \in \Wpzero{p}$ of $\gamma_\lambda$, that is,
	\begin{align}\label{30}
		\gamma_\lambda \l(u^\circ_\lambda\r)=\min\l[\gamma_\lambda(u)\,:\,u\in\Wpzero{p}\r].
	\end{align}
	
	Let $u \in \interior$ and choose $t \in (0,1)$ small so that $tu \leq \overline{u}_\lambda$. Recall that $\overline{u}_\lambda\in\interior$ and use Proposition 4.1.22 of Papageorgiou-R\u{a}dulescu-Repov\v{s} \cite[p.\,274]{15-Papageorgiou-Radulescu-Repovs-2019}.
	
	We have 
	\begin{align}\label{31}
		\gamma_\lambda(tu) \leq \frac{t^p}{p} \|\nabla u\|_p^p+\frac{t^q}{q}\|\nabla u\|_q^q-t \into \l[\lambda \overline{u}_\lambda^{-\eta}-f\l(x,\overline{u}_\lambda\r)\r]u\,dx.
	\end{align}
	
	Let $\lambda_0 =\l\|\overline{u}_\lambda^{\eta} f\l(x,\overline{u}_\lambda\r)\r\|_\infty$, see hypothesis H(i), and let $\lambda>\lambda_0$. Then
	\begin{align*}
		\into \l[\lambda \overline{u}_\lambda^{-\eta}-f\l(x,\overline{u}_\lambda\r)\r]\,dx=\overline{\mu}>0.
	\end{align*}
	So, from \eqref{31} we have 
	\begin{align*}
		\gamma_\lambda(tu) \leq c_{10}t^q-\overline{\mu} t\quad\text{for some } c_{10}>0,
	\end{align*}
	since $t\in (0,1)$ and $q<p$.
	
	Since $q>1$, by taking $t\in(0,1)$ even smaller if necessary, we see that $\gamma_\lambda (tu)<0$. Taking \eqref{30} into account we know that
	\begin{align*}
		\gamma_\lambda \l(u^\circ_\lambda\r)<0=\gamma_\lambda (0)\quad\text{for all }\lambda >\lambda_0.
	\end{align*}
	Thus, $u^\circ_\lambda\neq 0$. 
	
	From \eqref{30} we have $\gamma_\lambda'\l(u^\circ_\lambda\r)=0$, that is,
	\begin{align}\label{32}
		\l\lan A_p\l(u^\circ_\lambda\r),h\r\ran+\l\lan A_q\l(u^\circ_\lambda\r),h\r\ran=\into e_\lambda \l(x,u^\circ_\lambda\r)h\,dx\quad\text{ for all }h\in\Wpzero{p}.
	\end{align}
	We choose $h=\l(\overline{u}_\lambda-u^\circ_\lambda\r)^+\in\Wpzero{p}$ as test function in \eqref{32}. Applying \eqref{29} and Proposition \ref{proposition_2} gives
	\begin{align*}
		& \l\lan A_p\l(u^\circ_\lambda\r),\l(\overline{u}_\lambda-u^\circ_\lambda\r)^+\r\ran+\l\lan A_q\l(u^\circ_\lambda\r),\l(\overline{u}_\lambda-u^\circ_\lambda\r)^+\r\ran\\
		& =\into \l[\lambda \overline{u}_\lambda^{-\eta}-f\l(x,\overline{u}_\lambda\r)+\lambda \l(\l(u^\circ_\lambda\r)^+\r)^{\theta-1}\r] \l(\overline{u}_\lambda-u^\circ_\lambda\r)^+\,dx\\
		& \geq \into \l[\lambda \overline{u}_\lambda^{-\eta}-f\l(x,\overline{u}_\lambda\r)\r] \l(\overline{u}_\lambda-u^\circ_\lambda\r)^+\,dx\\
		& = \l\lan A_p\l(\overline{u}_\lambda\r),\l(\overline{u}_\lambda-u^\circ_\lambda\r)^+\r\ran+\l\lan A_q\l(\overline{u}_\lambda\r),\l(\overline{u}_\lambda-u^\circ_\lambda\r)^+\r\ran.
	\end{align*}
	As before, by the monotonicity of $A_p$ and $A_q$ we conclude that $\overline{u}_\lambda \leq u^\circ_\lambda$. Using this fact along with \eqref{29} and \eqref{32} we infer that
	\begin{align*}
		u^\circ_\lambda \in \mathcal{S}_\lambda\subseteq \interior,
	\end{align*}
	see Proposition \ref{proposition_4}. Therefore, $\lambda \in \mathcal{L}$ and so $(\lambda_0,+\infty) \subseteq \mathcal{L}\neq\emptyset$.
\end{proof}

The next proposition establishes a structural property for $\mathcal{L}$, namely that $\mathcal{L}$ is an upper half-line.

\begin{proposition}\label{proposition_6}
	If hypotheses H hold, $\lambda \in\mathcal{L}$ and $\mu>\lambda$, then $\mu \in \mathcal{L}$.
\end{proposition}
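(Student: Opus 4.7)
The plan is to mirror the truncation-and-minimization scheme used in the preceding existence proof, now with $u_\lambda\in\mathcal{S}_\lambda$ playing the role of the subsolution. Fix any $u_\lambda\in\mathcal{S}_\lambda$; by Proposition \ref{proposition_4} we have $u_\lambda\in\interior$, so $u_\lambda^{-\eta}\in L^s(\Omega)$ for some $s>N$ via the Hardy inequality (as in the proof of Proposition \ref{proposition_2}). Introduce the Carath\'eodory truncation
\begin{align*}
	\hat{e}_\mu(x,s)=
	\begin{cases}
		\mu\, u_\lambda(x)^{-\eta}+\mu\, u_\lambda(x)^{\theta-1}-f(x,u_\lambda(x)) & \text{if } s\leq u_\lambda(x),\\
		\mu\, s^{-\eta}+\mu\, s^{\theta-1}-f(x,s) & \text{if } u_\lambda(x)<s,
	\end{cases}
\end{align*}
set $\hat{E}_\mu(x,s)=\int_0^s\hat{e}_\mu(x,t)\,dt$, and consider the functional
\begin{align*}
	\gamma_\mu(u)=\frac{1}{p}\|\nabla u\|_p^p+\frac{1}{q}\|\nabla u\|_q^q-\into\hat{E}_\mu(x,u)\,dx,\quad u\in\Wpzero{p}.
\end{align*}

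The routine verifications are: $\gamma_\mu\in C^1(\Wpzero{p})$ (since $u_\lambda^{-\eta}\in L^s(\Omega)$ with $s>N$, compare Proposition \ref{proposition_2} and the $C^1$-argument cited there), sequential weak lower semicontinuity (from the compact Sobolev embedding), and coercivity, which comes from hypothesis H(ii): for $s\to+\infty$ the $-f(x,s)$ term eventually dominates $\mu s^{\theta-1}$, so $-\hat{E}_\mu(x,s)$ grows faster than any power that can compete with $\|\nabla u\|_p^p$, while for $s\leq 0$ the map $s\mapsto\hat{e}_\mu(x,s)$ is constant in $s$ and the resulting linear contribution to $-\into\hat{E}_\mu(x,u)\,dx$ is absorbed by $\|\nabla u\|_p^p$ through Poincar\'e. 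The direct method of the calculus of variations then yields a global minimizer $u_\mu\in\Wpzero{p}$ satisfying $\gamma_\mu'(u_\mu)=0$.

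The crux of the argument, and the step I expect to be the main obstacle, is proving the comparison $u_\lambda\leq u_\mu$, because this is exactly what collapses the truncated Euler--Lagrange equation back to the original problem $(\mathrm{P}_\mu)$. I would test $\gamma_\mu'(u_\mu)=0$ against $(u_\lambda-u_\mu)^+\in\Wpzero{p}$. On the set $\{u_\mu<u_\lambda\}$ one has $\hat{e}_\mu(x,u_\mu)=\mu u_\lambda^{-\eta}+\mu u_\lambda^{\theta-1}-f(x,u_\lambda)$, which strictly exceeds $\lambda u_\lambda^{-\eta}+\lambda u_\lambda^{\theta-1}-f(x,u_\lambda)$ on any set of positive measure, since $\mu>\lambda$ and $u_\lambda>0$ in $\Omega$. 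As $u_\lambda\in\mathcal{S}_\lambda$, the latter quantity equals $\langle A_p(u_\lambda)+A_q(u_\lambda),(u_\lambda-u_\mu)^+\rangle$, so after rearrangement one gets
\begin{align*}
	\langle A_p(u_\mu)-A_p(u_\lambda),(u_\lambda-u_\mu)^+\rangle+\langle A_q(u_\mu)-A_q(u_\lambda),(u_\lambda-u_\mu)^+\rangle>0
\end{align*}
whenever $|\{u_\mu<u_\lambda\}|>0$, contradicting the strict monotonicity of $A_p$ and $A_q$ from Proposition \ref{proposition_1}. Hence $u_\lambda\leq u_\mu$, and the second branch of $\hat{e}_\mu$ turns $\gamma_\mu'(u_\mu)=0$ into $(\mathrm{P}_\mu)$, so $u_\mu\in\mathcal{S}_\mu$ and $\mu\in\mathcal{L}$. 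The delicate design point in this argument is that the coefficient in front of both $u_\lambda^{-\eta}$ and $u_\lambda^{\theta-1}$ in the lower branch of the truncation must be $\mu$ (not $\lambda$); otherwise the strict comparison providing the contradiction fails.
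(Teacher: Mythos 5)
Your proposal is correct and follows essentially the same route as the paper: the identical truncation $g_\mu$ frozen at $u_\lambda$ with coefficient $\mu$ in the lower branch, the direct method applied to the associated $C^1$-functional, and the test function $(u_\lambda-u_\mu)^+$ to obtain $u_\lambda\le u_\mu$, after which the truncation collapses the Euler--Lagrange equation to (P$_\mu$). The only cosmetic difference is that you conclude the comparison by a strict-inequality contradiction with monotonicity of $A_p,A_q$, whereas the paper invokes monotonicity directly; both are the same standard step.
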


\begin{proof}
	Since $\lambda  \in \mathcal{L}$ there exists $u_\lambda \in\mathcal{S}_\lambda \subseteq \interior$, see Proposition \ref{proposition_4}. From Proposition \ref{proposition_3} we have $\overline{u}_\lambda \leq u_\lambda$. Therefore,
	\begin{align}\label{33}
		u_\lambda^{-\eta} \in \Lp{s} \quad\text{for }s>N.
	\end{align}
	We now introduce the Carath\'eodory function $g_\mu \colon\Omega \times \R\to\R$ defined by
	\begin{align}\label{34}
		g_\mu(x,s)=
		\begin{cases}
			\mu \l[u_\lambda(x)^{-\eta}+u_\lambda(x)^{\theta-1}\r]-f\l(x,u_\lambda(x)\r)  &\text{if } s \leq u_\lambda(x),\\
			\mu \l[s^{-\eta}+s^{\theta-1}\r]-f\l(x,s\r) &\text{if }u_\lambda(x)<s.
		\end{cases}
	\end{align}
	We set $G_\mu(x,s)=\int^s_0g_\mu(x,t)\,dt$ and consider the $C^1$-functional $\ph_\mu\colon\Wpzero{p}\to\R$ defined by
	\begin{align*}
		\ph_\mu(u)=\frac{1}{p}\|\nabla u\|_p^p+\frac{1}{q}\|\nabla u\|_q^q-\into G_\mu(x,u)\,dx\quad\text{for all }u\in\Wpzero{p},
	\end{align*}
	see \eqref{33}. 
	
	From \eqref{33} and hypothesis H(ii) we see that $\ph_\mu$ is coercive and we know it is also sequentially weakly lower semicontinuous. Hence, we can find $u_\mu \in\Wpzero{p}$ such that 
	\begin{align*}
		\ph_\mu \l(u_\mu\r)=\min\l[\ph_\mu(u)\,:\,u\in\Wpzero{p}\r].
	\end{align*}
	This implies that $\ph_\mu'\l(u_\mu\r)=0$, that is,
	\begin{align}\label{35}
		\l\lan A_p\l(u_\mu\r),h\r\ran+\l\lan A_q\l(u_\mu\r),h\r\ran=\into g_\mu \l(x,u_\mu\r)h\,dx\quad\text{ for all }h\in\Wpzero{p}.
	\end{align}
	We choose $h=\l(u_\lambda-u_\mu\r)^+\in\Wpzero{p}$ as test function in \eqref{35}. Applying \eqref{34}, $\lambda<\mu$ and $u_\lambda \in \mathcal{S}_\lambda$, we obtain	
	\begin{align*}
	& \l\lan A_p\l(u_\mu\r),\l(u_\lambda-u_\mu\r)^+\r\ran+\l\lan A_q\l(u_\mu\r),\l(u_\lambda-u_\mu\r)^+\r\ran\\
	& =\into \l[\mu \l(u_\lambda^{-\eta}+u_\lambda^{\theta-1}\r)-f\l(x,u_\lambda\r)\r] \l(u_\lambda-u_\mu\r)^+\,dx\\
	& \geq \into \l[\lambda \l(u_\lambda^{-\eta}+u_\lambda^{\theta-1}\r)-f\l(x,u_\lambda\r)\r] \l(u_\lambda-u_\mu\r)^+\,dx\\
	& = \l\lan A_p\l(u_\lambda\r),\l(u_\lambda-u_\mu\r)^+\r\ran+\l\lan A_q\l(u_\lambda\r),\l(u_\lambda-u_\mu\r)^+\r\ran.
	\end{align*}
	Again, from the monotonicity of $A_p$ and $A_q$, we deduce that $u_\lambda \leq u_\mu$. This along with \eqref{34} as well as \eqref{35} implies that $u_\mu \in \mathcal{S}_\mu \subseteq \interior$. Hence, $\mu \in \mathcal{L}$.
\end{proof}

So, according to Proposition \ref{proposition_6}, $\mathcal{L}$ is an upper half-line. Moreover, a byproduct of the proof of Proposition \ref{proposition_6} is the following corollary.

\begin{corollary}\label{corollary_7}
	If hypotheses H hold, $\lambda \in\mathcal{L}$, $u_\lambda\in\mathcal{S}_\lambda$ and $\mu>\lambda$, then $\mu\in\mathcal{L}$ and there exists $u_\mu \in \mathcal{S}_\mu$ such that $u_\lambda \leq u_\mu$.
\end{corollary}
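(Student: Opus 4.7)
The plan is to rerun the construction in the proof of Proposition~\ref{proposition_6} with the given $u_\lambda \in \mathcal{S}_\lambda$ playing the role of the starting element, and to read off the comparison $u_\lambda \le u_\mu$ as a byproduct of the truncation step. No new tools beyond those already invoked there are required; the corollary is essentially bookkeeping that records the monotone chain of solutions produced by that argument.

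First I would observe that, since $u_\lambda \in \mathcal{S}_\lambda$, Proposition~\ref{proposition_3} yields $\overline{u}_\lambda \le u_\lambda$, and the argument from the proof of Proposition~\ref{proposition_2} (Hardy's inequality combined with $\hat{v}_1 \le \overline{u}_\lambda$ and $\overline{u}_\lambda \in \interior$) gives $u_\lambda^{-\eta} \in L^s(\Omega)$ for some $s>N$. This makes the truncation $g_\mu$ of \eqref{34}, formed with this specific $u_\lambda$, a well-defined Carath\'eodory function with enough integrability to ensure that the associated functional $\varphi_\mu$ is of class $C^1$ on $W^{1,p}_0(\Omega)$. Coercivity follows from hypothesis H(ii) (which lets $f$ absorb the $\mu s^{\theta-1}$ term in $g_\mu$ for $s$ large), and sequential weak lower semicontinuity is standard.

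Next, by the direct method I would produce a global minimizer $u_\mu \in W^{1,p}_0(\Omega)$ of $\varphi_\mu$, hence a solution of \eqref{35}. Testing \eqref{35} with $(u_\lambda - u_\mu)^+\in W^{1,p}_0(\Omega)$ and exploiting the definition of $g_\mu$ on $\{s \le u_\lambda(x)\}$ together with $\lambda < \mu$ and $u_\lambda \in \mathcal{S}_\lambda$ gives
\begin{align*}
\bigl\langle A_p(u_\mu)-A_p(u_\lambda),(u_\lambda-u_\mu)^+\bigr\rangle + \bigl\langle A_q(u_\mu)-A_q(u_\lambda),(u_\lambda-u_\mu)^+\bigr\rangle \ge 0.
\end{align*}
Strict monotonicity of $A_p+A_q$ (Proposition~\ref{proposition_1}) then forces $u_\lambda \le u_\mu$. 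With this inequality in hand the second branch of \eqref{34} is the one active for $u_\mu$, so $u_\mu$ is a weak solution of \eqref{problem} at parameter $\mu$; Proposition~\ref{proposition_4} upgrades it to $u_\mu \in \interior$, so $u_\mu \in \mathcal{S}_\mu$ and $\mu \in \mathcal{L}$.

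The only potential obstacle is the comparison $u_\lambda \le u_\mu$, but this is precisely what the truncation \eqref{34} was designed to deliver, and its verification is a routine application of the strict monotonicity of $A_p+A_q$. Everything else is a direct repetition of arguments already carried out in Propositions~\ref{proposition_2} through \ref{proposition_6}, so in effect the proof consists of checking that the construction in Proposition~\ref{proposition_6} automatically yields a $u_\mu$ dominating the prescribed $u_\lambda$, rather than merely some $u_\mu \in \mathcal{S}_\mu$.
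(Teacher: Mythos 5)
Your proposal is correct and coincides with the paper's treatment: the paper obtains Corollary \ref{corollary_7} precisely as a byproduct of the proof of Proposition \ref{proposition_6}, whose truncation \eqref{34} is built on the given $u_\lambda\in\mathcal{S}_\lambda$ and whose minimizer $u_\mu$ is shown there, by testing with $(u_\lambda-u_\mu)^+$ and using the monotonicity of $A_p$ and $A_q$, to satisfy $u_\lambda\le u_\mu$ and $u_\mu\in\mathcal{S}_\mu$. Nothing further is needed.
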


If we strengthen a little the conditions on $f(x,\cdot)$, we can improve the assertion of this corollary.

\begin{enumerate}[leftmargin=1.2cm]
	\item[H':]
	$f\colon\Omega \times \R\to \R$ is a Carath\'{e}odory function such that $f(x,0)=0$ for a.\,a.\,$x\in\Omega$, $f(x,\cdot)$ is nondecreasing, hypotheses H'(i), (ii), (iii) are the same as the corresponding hypotheses H(i), (ii), (iii) and
	\begin{enumerate}[itemsep=0.2cm, topsep=0.2cm]
		\item[(iv)]
			for every $\varrho>0$ there exists $\hat{\xi}_\varrho>0$ such that the function
			\begin{align*}
				s \to \hat{\xi}_\varrho s^{p-1}- f(x,s)
		\end{align*}
		is nondecreasing on $[0,\varrho]$ for a.\,a.\,$x\in\Omega$.
	\end{enumerate}
\end{enumerate}

\begin{remark}
	The examples in Section \ref{section_2} satisfy this extra condition.
\end{remark}

\begin{proposition}
	If hypotheses H' hold, $\lambda \in\mathcal{L}$, $u_\lambda\in\mathcal{S}_\lambda$ and $\mu>\lambda$, then $\mu\in\mathcal{L}$ and there exists $u_\mu \in \mathcal{S}_\mu$ such that $u_\mu-u_\lambda  \in \interior$.
\end{proposition}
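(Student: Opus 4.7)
The first half of the statement, namely $\mu\in\mathcal{L}$ together with some $u_\mu\in\mathcal{S}_\mu$ satisfying $u_\lambda\leq u_\mu$, is already furnished by Corollary~\ref{corollary_7}, and Proposition~\ref{proposition_4} places both $u_\lambda, u_\mu$ in $\interior$; thus $w:=u_\mu-u_\lambda\in C^1_0(\close)_+$. The task I will address is to upgrade this weak ordering to the strict one $w\in\interior$, and this is where hypothesis H$'$(iv) enters.

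I set $\rho:=\|u_\mu\|_\infty$ and let $\hat\xi_\rho>0$ be the constant given by H$'$(iv), so that $s\mapsto \hat\xi_\rho s^{p-1}-f(x,s)$ is nondecreasing on $[0,\rho]$ for a.a.\,$x\in\Omega$. Adding the absorption term $\hat\xi_\rho u^{p-1}$ to both sides of the equations for $u_\lambda$ and $u_\mu$ and subtracting, I obtain, for every $h\in\Wpzero{p}$ with $h\geq 0$,
\begin{align*}
	&\lan A_p(u_\mu)-A_p(u_\lambda),h\ran+\lan A_q(u_\mu)-A_q(u_\lambda),h\ran+\hat\xi_\rho\into (u_\mu^{p-1}-u_\lambda^{p-1})h\,dx\\
	&\qquad\geq \into \l[\mu u_\mu^{-\eta}-\lambda u_\lambda^{-\eta}+(\mu-\lambda) u_\lambda^{\theta-1}\r] h\,dx,
\end{align*}
where H$'$(iv) (combined with $u_\lambda\leq u_\mu\leq \rho$) controls the $f$-contribution and the monotonicity $u_\mu^{\theta-1}\geq u_\lambda^{\theta-1}$ the superlinear one.

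For the singular contribution I split $\mu u_\mu^{-\eta}-\lambda u_\lambda^{-\eta}=(\mu-\lambda)u_\lambda^{-\eta}+\mu(u_\mu^{-\eta}-u_\lambda^{-\eta})$: the first summand is uniformly positive, bounded below by $(\mu-\lambda)\|u_\lambda\|_\infty^{-\eta}>0$ on all of $\Omega$, whereas the second is nonpositive and, via the mean value theorem applied to $s\mapsto s^{-\eta}$ on $[u_\lambda(x),u_\mu(x)]$, is bounded below by $-\mu\eta u_\lambda^{-\eta-1}w$. Moving this lower-order term to the left-hand side exhibits $w$ as a weak supersolution of a $(p,q)$-type operator with an extra absorption coefficient $\mu\eta u_\lambda^{-\eta-1}$ (singular on $\partial\Omega$) and with source majorized from below by the strictly positive function $(\mu-\lambda)[u_\lambda^{-\eta}+u_\lambda^{\theta-1}]$.

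The final step will be to invoke a strong comparison principle for the $(p,q)$-Laplacian with absorption of the type used in \cite{16-Papageorgiou-Radulescu-Repovs-2020}, together with the Hopf boundary-point lemma of Pucci-Serrin~\cite{21-Pucci-Serrin-2007}, in order to conclude $w>0$ in $\Omega$ and $-\partial w/\partial n>0$ on $\partial\Omega$, i.e.\ $u_\mu-u_\lambda\in\interior$. The hard part will be handling the fact that the effective absorption coefficient $\mu\eta u_\lambda^{-\eta-1}$ blows up like $\hat d(x)^{-\eta-1}$ near $\partial\Omega$, so the resulting linearized inequality is singular; this singularity is to be balanced against the matching blow-up $(\mu-\lambda)u_\lambda^{-\eta}\sim \hat d(x)^{-\eta}$ of the driving term, using that both $u_\lambda,u_\mu\in\interior$ are comparable to the distance function $\hat d$ near $\partial\Omega$.
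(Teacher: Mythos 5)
Your reduction to a comparison problem via H$'$(iv) and Corollary \ref{corollary_7} starts out along the same lines as the paper, but the proof is not actually finished: the decisive step, upgrading $u_\lambda\leq u_\mu$ to $u_\mu-u_\lambda\in\interior$, is only announced (``the final step will be to invoke a strong comparison principle\dots the hard part will be handling\dots''), not carried out. Moreover, the route you choose makes that step genuinely harder than it needs to be. By linearizing $s\mapsto s^{-\eta}$ with the mean value theorem you turn the singular reaction into an absorption coefficient $\mu\eta u_\lambda^{-\eta-1}\sim \hat d^{-\eta-1}$, and you would then need a strong maximum principle/Hopf lemma for a $(p,q)$-operator with a zero-order coefficient blowing up at rate $\hat d^{-\eta-1}$ on $\partial\Omega$. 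No such result is cited in a usable form (the Pucci--Serrin results you mention assume locally bounded coefficients, and Proposition 7 of \cite{16-Papageorgiou-Radulescu-Repovs-2020} is formulated for the singular equation itself, not for your linearized inequality), so the argument has a genuine gap exactly at the point you flag as ``the hard part''.

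The paper avoids this difficulty by never linearizing the singular term. Using $\lambda<\mu$, the equations for $u_\lambda,u_\mu$ and H$'$(iv) (with $\varrho=\|u_\mu\|_\infty$), it writes the two solutions as sub-/supersolution of the \emph{same} singular operator, namely
$-\Delta_p u_\lambda-\Delta_q u_\lambda+\hat\xi_\varrho u_\lambda^{p-1}-\mu u_\lambda^{-\eta}\leq -\Delta_p u_\mu-\Delta_q u_\mu+\hat\xi_\varrho u_\mu^{p-1}-\mu u_\mu^{-\eta}$,
and observes that the gap between the two right-hand sides produced along the way is at least $(\mu-\lambda)u_\lambda^{\theta-1}\succ 0$ because $u_\lambda\in\interior$. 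This puts the situation squarely within Proposition 7 of \cite{16-Papageorgiou-Radulescu-Repovs-2020}, a strong comparison principle designed for such singular inequalities, which immediately yields $u_\mu-u_\lambda\in\interior$. To repair your proof you should either restructure it in this way, or supply (with proof or a precise citation) a strong comparison/Hopf result valid for the singular absorption coefficient your linearization produces; as written, the conclusion is not established.
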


\begin{proof}
	From Corollary \ref{corollary_7} we already know that $\mu \in\mathcal{L}$ and we can find $u_\mu\in\mathcal{S}_\mu\subseteq \interior$ such that
	\begin{align}\label{36}
		u_\lambda \leq u_\mu.
	\end{align}
	Let $\varrho = \|u_\mu\|_\infty$ and let $\hat{\xi}_\varrho>0$ be as postulated by hypothesis H'(iv). Since $\lambda <\mu$, $u_\lambda \in \mathcal{S}_\lambda$ and due to \eqref{36} as well as hypothesis H'(iv) we obtain
	\begin{align}\label{37}
		\begin{split}
			&-\Delta_p u_\lambda -\Delta_q u_\lambda +\hat{\xi}_\varrho u_\lambda^{p-1}-\mu u_\lambda^{-\eta}\\
			&\leq -\Delta_p u_\lambda -\Delta_q u_\lambda +\hat{\xi}_\varrho u_\lambda^{p-1}-\lambda u_\lambda^{-\eta}\\
			&=\lambda u_\lambda^{\theta-1}+\hat{\xi}_\varrho u_\lambda^{p-1}-f(x,u_\lambda)\\
			&\leq \mu u_\mu^{\theta-1}+\hat{\xi}_\varrho u_\mu^{p-1}-f(x,u_\mu)\\
			& = -\Delta_p u_\mu -\Delta_q u_\mu +\hat{\xi}_\varrho u_\mu^{p-1}-\mu u_\mu^{-\eta}.
		\end{split}
	\end{align}
	Note that since $u_\lambda \interior$ we have
	\begin{align*}
		0 \prec \l[\mu-\lambda\r] u_\lambda^{\theta-1}.
	\end{align*}
	So, from \eqref{37} and Proposition 7 of Papageorgiou-R\u{a}dulescu-Repov\v{s} \cite{16-Papageorgiou-Radulescu-Repovs-2020}, we conclude that $u_\mu-u_\lambda\in\interior$.
\end{proof}

Let $\lambda_*=\inf \mathcal{L}$. 

\begin{proposition}
	If hypotheses H' hold, then $\lambda_*>0$.
\end{proposition}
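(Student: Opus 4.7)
The argument is by contradiction. Assume $\lambda_*=0$, so there exist $\lambda_n\searrow 0^+$ in $\mathcal{L}$, and by Proposition~\ref{proposition_4} one may select $u_n\in\mathcal{S}_{\lambda_n}\subseteq\interior$ for each $n$.

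\textbf{A priori bound.} Testing $(P_{\lambda_n})$ with $u_n$ yields
\begin{equation*}
\|u_n\|^p+\|\nabla u_n\|_q^q+\into f(x,u_n)\,u_n\,dx=\lambda_n\into u_n^{1-\eta}\,dx+\lambda_n\into u_n^\theta\,dx.
\end{equation*}
Hypothesis H'(ii) (identical to H(ii)) provides, for every $K>0$, a constant $C_K>0$ with $f(x,s)s\ge K s^\theta-C_K$ on $\R_+$; taking $K=1$ absorbs $\lambda_n\|u_n\|_\theta^\theta$ for $\lambda_n<\tfrac12$, and Young's inequality disposes of $\lambda_n\|u_n\|_{1-\eta}^{1-\eta}$. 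This yields $\|u_n\|\le M$ uniformly in $n$, and since the right-hand side of the identity vanishes with $\lambda_n$, $u_n\to 0$ strongly in $W^{1,p}_0(\Omega)$.

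\textbf{Upgrade to $C^1_0(\close)$-convergence.} The nonlinear $L^\infty$-bound of Marino-Winkert \cite{13-Marino-Winkert-2020}, combined with Lieberman's $C^{1,\alpha}$-regularity (with the singular term controlled via the comparison $u_n\ge\overline{u}_{\lambda_n}$ from Proposition~\ref{proposition_3} and Hardy's inequality, exactly as in the proof of Proposition~\ref{proposition_2}), gives relative compactness of $\{u_n\}$ in $C^1_0(\close)$. Along a subsequence $u_n\to u_*$ in $C^1_0(\close)$; passing to the limit in the weak form of $(P_{\lambda_n})$ shows that $u_*$ weakly satisfies $-\Delta_p u_*-\Delta_q u_*+f(x,u_*)=0$, and testing with $u_*\ge 0$ (recall $f(x,\cdot)\ge 0$ on $\R_+$) forces $u_*\equiv 0$. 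Hence $u_n\to 0$ in $C^1_0(\close)$; in particular $M_n:=\|u_n\|_\infty\to 0$.

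\textbf{Contradiction.} Integrating the equation over the super-level sets $\{u_n>M_n-\delta\}$ and letting $\delta\to 0^+$ one recovers the weak maximum-principle identity at an interior maximum $x_n$ of $u_n$, namely
\begin{equation*}
\lambda_n\bigl(M_n^{-\eta}+M_n^{\theta-1}\bigr)\ge f(x_n,M_n)\ge \hat{\eta}_1 M_n^{q-1},
\end{equation*}
where the last estimate comes from H(iii) since $M_n\le\delta_0$ eventually. As $\theta>q$ and $M_n\le 1$ for $n$ large, the superdiffusive contribution is subdominant and one concludes $\lambda_n\ge\tfrac{\hat{\eta}_1}{2}M_n^{q-1+\eta}$; equivalently, $M_n$ is bounded \emph{above} by a multiple of $\lambda_n^{1/(q-1+\eta)}$. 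The heart of the proof, and the main obstacle, is to complement this with a uniform lower bound $M_n\ge c_0>0$ that would immediately yield $\lambda_n\ge c>0$ and contradict $\lambda_n\to 0^+$. This is where H'(iv) enters: with $\varrho:=\sup_n\|u_n\|_\infty<\infty$ and $\hat{\xi}_\varrho$ as in H'(iv), the monotonicity of $s\mapsto\hat{\xi}_\varrho s^{p-1}-f(x,s)$ makes the shifted operator $-\Delta_p -\Delta_q+\hat{\xi}_\varrho(\cdot)^{p-1}$ amenable to the strong comparison principle of \cite[Proposition~7]{16-Papageorgiou-Radulescu-Repovs-2020}, which, applied to $u_n$ and $\overline u_{\lambda_n}$ together with the strictly positive source $\lambda_n u_n^{\theta-1}\succ 0$, places $u_n-\overline u_{\lambda_n}\in\interior$ with a quantitatively controlled ``gap''. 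Tracking this gap by testing $(P_{\lambda_n})$ against a fixed function in $\interior$ (for instance the first eigenfunction of $-\Delta_q$) and bounding the singular and superdiffusive contributions via the uniform $C^1$-regularity of $\{u_n\}$ delivers the required lower bound on $M_n$, completing the contradiction.
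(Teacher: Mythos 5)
There is a genuine gap, and you have in fact flagged it yourself: everything you actually establish is \emph{consistent} with $\lambda_n\to 0^+$, and the one step that would produce the contradiction is only described, not proved. Your a priori estimate gives $u_n\to 0$ in $W^{1,p}_0(\Omega)$, the regularity upgrade gives $M_n=\|u_n\|_\infty\to 0$, and the maximum-point estimate $\lambda_n\bigl(M_n^{-\eta}+M_n^{\theta-1}\bigr)\ge \hat{\eta}_1 M_n^{q-1}$ yields only an \emph{upper} bound $M_n\le C\lambda_n^{1/(q-1+\eta)}$, i.e.\ it points in the same direction as your convergence result; no contradiction follows. The missing uniform lower bound $M_n\ge c_0>0$ is the entire nonexistence content of the proposition, and the route you sketch for it cannot work: comparison with $\overline{u}_{\lambda_n}$ is useless because $\overline{u}_{\lambda_n}$ itself tends to zero (testing the auxiliary problem with $\overline{u}_{\lambda_n}$ gives $\|\overline{u}_{\lambda_n}\|^p\le \lambda_n\int_\Omega \overline{u}_{\lambda_n}^{1-\eta}\,dx$, forcing $\overline{u}_{\lambda_n}\to 0$), and testing $(P_{\lambda_n})$ against a fixed element of $\interior$ produces terms that all vanish consistently as $\lambda_n\to0^+$. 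Note also that near $s=0$ the singular term $\lambda s^{-\eta}$ dominates $\hat{\eta}_1 s^{q-1}$ for every $\lambda>0$, so hypothesis H(iii) alone cannot forbid small-amplitude solutions; ruling them out is essentially as hard as the nonexistence statement itself. (A secondary soft spot: the uniform $C^1_0(\overline\Omega)$ compactness of $\{u_n\}$ is asserted, but the lower barrier controlling the singular term degenerates as $\lambda_n\to0^+$, so this also needs an argument.)

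The paper avoids all of this by working with a \emph{fixed} small parameter rather than a limiting sequence. Using H(ii) and H(iii) together with the monotonicity of $f(x,\cdot)$, one finds $\hat{\lambda}>0$ with $\hat{\lambda}s^{\theta-1}\le f(x,s)$ for a.a.\ $x\in\Omega$ and \emph{all} $s\ge 0$ (not just near zero). Then, for $\lambda\in(0,\hat{\lambda})$ and a hypothetical solution $u_\lambda\in\interior$ with $\varrho_\lambda=\max_{\overline\Omega}u_\lambda$, hypothesis H$'$(iv) makes $s\mapsto \lambda s^{\theta-1}-f(x,s)+\hat{\xi}_{\varrho_\lambda}s^{p-1}$ nondecreasing on $[0,\varrho_\lambda]$, and the constant $\varrho_\lambda-\delta$ becomes, for $\delta>0$ small, a supersolution of the shifted singular problem dominating $u_\lambda$; the weak comparison principle of \cite[Proposition 6]{16-Papageorgiou-Radulescu-Repovs-2020} then gives $u_\lambda\le\varrho_\lambda-\delta$ on $\overline\Omega$, contradicting the definition of $\varrho_\lambda$. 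The key ingredient you never use is this global domination $\hat{\lambda}s^{\theta-1}\le f(x,s)$ coming from the superdiffusive hypothesis H(ii); it is what excludes solutions of every amplitude at once and removes any need for the lower bound on $M_n$ that your argument cannot supply.
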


\begin{proof}
	On account of hypotheses H'(ii), (iii) we can find $\hat{\lambda}>0$ such that
	\begin{align}\label{38}
		\hat{\lambda}s^{\theta-1}-f(x,s) \leq 0 \quad\text{for a.a.\,}x\in\Omega \text{ and for all }s \geq 0.
	\end{align}
	Consider $\lambda \in (0,\hat{\lambda})$ and suppose that $\lambda\in\mathcal{L}$. Then we can find $u_\lambda \in \mathcal{S}_\lambda\subseteq \interior$. We set $\varrho_\lambda=\max_{\close} u_\lambda$. Then, for $\delta\in (0,\varrho_\lambda)$ small enough, we set $ \varrho_\lambda^\delta=\varrho_\lambda-\delta>0$. For $\hat{\xi}_\lambda=\hat{\xi}_{\varrho_\lambda}>0$ as postulated by hypothesis H'(iv) along with \eqref{38}, $\hat{\lambda}>\lambda$, $u_\lambda\in\mathcal{S}_\lambda$ and $\delta>0$ small enough, we obtain
	\begin{align*}
		& -\Delta_p \varrho_\lambda^\delta-\Delta_q \varrho_\lambda^\delta+\hat{\xi}_\lambda \l(\varrho_\lambda^\delta\r)^{p-1}-\lambda \l(\varrho_\lambda^\delta\r)^{-\eta}\\
		&\geq \hat{\xi}_\lambda \varrho_\lambda^{p-1}-\chi(\delta)\quad\text{with }\chi(\delta)\to 0^+ \text{ as }\delta\to 0^+\\
		& \geq \hat{\lambda} \varrho_\lambda^{\theta-1} -f\l(x,\varrho_\lambda\r)+\hat{\xi}_\lambda \varrho_\lambda^{p-1}-\chi(\delta)\\
		& = \lambda \varrho_\lambda^{\theta-1}-f\l(x,\varrho_\lambda\r)+\hat{\xi}_\lambda \varrho_\lambda^{p-1}+\l[\hat{\lambda}-\lambda \r] \varrho_\lambda^{\theta-1} -\chi(\delta)\\
		& \geq  \lambda \varrho_\lambda^{\theta-1}-f\l(x,\varrho_\lambda\r)+\hat{\xi}_\lambda \varrho_\lambda^{p-1}\\
		& \geq \lambda u_\lambda^{\theta-1}-f\l(x,u_\lambda\r)+\hat{\xi}_\lambda u_\lambda^{p-1}\\
		& = -\Delta_p u_\lambda-\Delta_q u_\lambda+\hat{\xi}_\lambda u_\lambda^{p-1}-\lambda u_\lambda^{-\eta}.
	\end{align*}
	Invoking Proposition 6 of Papageorgiou-R\u{a}dulescu-Repov\v{s} \cite{16-Papageorgiou-Radulescu-Repovs-2020}, we have that
	\begin{align*}
		\varrho_\lambda^\delta > u_\lambda(x) \quad\text{ for all }x\in\close \text{ and for all small }\delta \in (0,\varrho_\lambda),
	\end{align*}
	a contradiction to the definition of $\varrho_\lambda$. Therefore
	\begin{align*}
		0<\hat{\lambda} \leq \lambda_* =\inf \mathcal{L}.
	\end{align*}
\end{proof}

Next, we show that $\lambda_*$ is admissible, that is, $\lambda_*>0$.

\begin{proposition}
	If hypotheses H' hold, then $\lambda_* \in \mathcal{L}$.
\end{proposition}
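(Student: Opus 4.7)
The plan is to approach $\lambda_*$ from above and exhibit a solution at the threshold as a $C^1$-limit. I would pick a sequence $\lambda_n\downarrow \lambda_*$ with $\lambda_n\in\mathcal{L}$ (possible because $\mathcal{L}$ is an upper half-line by Proposition \ref{proposition_6}) together with $u_n\in\mathcal{S}_{\lambda_n}\subseteq\interior$, and then produce $u_*\in\mathcal{S}_{\lambda_*}$ as a cluster point of this sequence.

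The first step is to bracket the $u_n$'s. Proposition \ref{proposition_3} gives $\overline{u}_{\lambda_n}\leq u_n$, and the monotonicity of $\lambda\mapsto \overline{u}_\lambda$ in Proposition \ref{proposition_2} yields $\overline{u}_{\lambda_*}\leq \overline{u}_{\lambda_n}\leq u_n$ for every $n$. Since $\overline{u}_{\lambda_*}\in\interior$, the pointwise bound $u_n^{-\eta}\leq \overline{u}_{\lambda_*}^{-\eta}$ together with Hardy's inequality (as exploited in the proof of Proposition \ref{proposition_2}) controls the singular source uniformly in $\Lp{s}$ for some $s>N$. To obtain a companion $L^\infty$ estimate from above, I would fix $\lambda_1>\lambda_*$ with $\lambda_n\leq \lambda_1$ and use hypothesis H(ii) to produce $M>0$ such that $\lambda s^{\theta-1}-f(x,s)\leq M$ for a.\,a.\,$x\in\Omega$, all $s\geq 0$, and all $\lambda\in(\lambda_*,\lambda_1]$. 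The right-hand side of \eqref{problem} for $u=u_n$ is then dominated in $\Lp{s}$ by a function independent of $n$, and the Marino-Winkert $L^\infty$ bound (invoked already in the proof of Proposition \ref{proposition_2}) yields $\|u_n\|_\infty\leq C_0$ uniformly.

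With the uniform $L^\infty$ estimate in hand, hypothesis H(i) makes $\{N_f(u_n)\}_{n\in\N}$ and $\{u_n^{\theta-1}\}_{n\in\N}$ bounded in $\Linf$, while the singular term stays dominated by $\overline{u}_{\lambda_*}^{-\eta}\in\Lp{s}$ with $s>N$. The nonlinear regularity theory of Lieberman then provides $\alpha\in(0,1)$ such that $\{u_n\}_{n\in\N}\subseteq C^{1,\alpha}_0(\close)$ is bounded, and Arzel\`a-Ascoli yields a subsequence with $u_n\to u_*$ in $C^1_0(\close)$. Passing to the limit in the weak formulation
\begin{align*}
	\lan A_p(u_n),h\ran+\lan A_q(u_n),h\ran=\into \l[\lambda_n\l(u_n^{-\eta}+u_n^{\theta-1}\r)-f(x,u_n)\r]h\,dx
\end{align*}
for arbitrary $h\in\Wpzero{p}$ is then routine: the operator terms converge by continuity of $A_p,A_q$ (Proposition \ref{proposition_1}) and $C^1$-convergence of $u_n$, the terms $u_n^{\theta-1}$ and $f(\cdot,u_n)$ converge uniformly, and the singular term is handled by the dominated convergence theorem with the domination $|u_n^{-\eta}h|\leq \overline{u}_{\lambda_*}^{-\eta}|h|\in\Lp{1}$. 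Since $u_*\geq \overline{u}_{\lambda_*}>0$ on $\Omega$, the limit $u_*$ is a nontrivial positive solution of \eqref{problem} at the parameter $\lambda_*$, so $\lambda_*\in\mathcal{L}$.

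The main obstacle is the uniform $L^\infty$ bound on $\{u_n\}$: the lower barrier $u_n\geq \overline{u}_{\lambda_*}$ is indispensable for keeping $u_n^{-\eta}$ in a fixed Lebesgue space, and the superdiffusive term $\lambda_n u_n^{\theta-1}$ must be absorbed by the perturbation $f(x,u_n)$ via hypothesis H(ii). This cancellation is uniform in $n$ precisely because $\lambda_n$ remains in the compact range $(\lambda_*,\lambda_1]$; once this is in place, the remainder of the argument is a standard regularity and compactness passage to the limit.
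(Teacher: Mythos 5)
Your overall strategy---approximate $\lambda_*$ from above by $\lambda_n\searrow\lambda_*$ with $u_n\in\mathcal{S}_{\lambda_n}$, keep the lower barrier $\overline{u}_{\lambda_*}\leq u_n$, control the superdiffusive part by $\lambda_n s^{\theta-1}-f(x,s)\leq M$, and pass to the limit---is the same as the paper's, but you replace its compactness mechanism (a uniform $\Wpzero{p}$ bound obtained by testing the weak formulation with $h=u_n$, followed by the $(\Ss)_+$ property of $A_p$ to get strong Sobolev convergence) by uniform $L^\infty$ and $C^{1,\alpha}$ estimates plus Arzel\`a--Ascoli. The weak point is exactly the step you single out as the main obstacle: the uniform bound $\|u_n\|_\infty\leq C_0$ does not follow as written. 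First, the right-hand sides $g_n=\lambda_n\l(u_n^{-\eta}+u_n^{\theta-1}\r)-f(x,u_n)$ are \emph{not} dominated in $\Lp{s}$ by an $n$-independent function: only their positive parts are (via the bound on $\lambda_n s^{\theta-1}-f(x,s)$ and $u_n^{-\eta}\leq\overline{u}_{\lambda_*}^{-\eta}$), while $-f(x,u_n)$ has growth $u_n^{r-1}$ with $r$ possibly up to $p^*$ and is uncontrolled until you already know $u_n$ is bounded. Second, the Marino--Winkert result yields an $L^\infty$ bound for a given solution with a constant depending on its $\Wpzero{p}$-norm (and the data of its own equation); uniformity over the family $\{u_n\}_{n\in\N}$ therefore needs a prior uniform $\Wpzero{p}$ estimate---precisely the elementary estimate the paper gets from $h=u_n$, $\lambda_n s^{\theta-1}-f(x,s)\leq c_{11}$ and $\into u_n^{1-\eta}\,dx\leq c\,(1+\|u_n\|)$---which you never establish. (A one-sided comparison with the solution $w$ of $-\Delta_p w-\Delta_q w=\lambda_1\overline{u}_{\lambda_*}^{-\eta}+M$ would also do, but you do not make that argument.)

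Once a uniform bound is secured, your route (uniform $C^{1,\alpha}$ estimates and convergence in $C^1_0(\close)$) can be carried out and indeed mirrors what the paper does for the approximations $\hat{v}_\eps$ in the proof of Proposition \ref{proposition_2}; but note that for the present proposition the paper's limit passage is lighter: strong $\Wpzero{p}$ convergence via $(\Ss)_+$ plus dominated convergence, using $u_n^{-\eta}|h|\leq\overline{u}_{\lambda_*}^{-\eta}|h|\in\Lp{1}$ by Hardy's inequality, suffices, so no uniform $L^\infty$ or $C^{1,\alpha}$ estimate is needed at all. Be also careful with the claim $\overline{u}_{\lambda_*}^{-\eta}\in\Lp{s}$ for some $s>N$: since $\overline{u}_{\lambda_*}$ behaves like the distance function near $\partial\Omega$, this requires $\eta s<1$, hence $\eta<1/N$; the paper's own proof of this proposition avoids relying on that integrability, whereas your regularity step leans on it.
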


\begin{proof}
	Let $\{\lambda_n\}_{n\in\N} \subseteq \mathcal{L}$ be such that $\lambda_n \searrow \lambda_*$. For every $n \in \N$, let $u_n \in \mathcal{S}_{\lambda_n}\subseteq \interior$. From Proposition \ref{proposition_2} we know that
	\begin{align}\label{39}
		\overline{u}_{\lambda_*} \leq u_n\quad\text{for all }n \in \N.
	\end{align}
	Moreover we have
	\begin{align}\label{40}
		\l\lan A_p\l(u_n\r),h\r\ran+\l\lan A_q\l(u_n\r),h\r\ran =\into \l[\lambda_n \l(u_n^{-\eta}+u_n^{\theta-1}\r)-f\l(x,u_n\r)\r]h\,dx
	\end{align}
	for all $h \in\Wpzero{p}$ and for all $n \in \N$. 
	
	On account of hypotheses H'(i), (ii), (iii) there exists $c_{11}>0$ such that
	\begin{align}\label{41}
		\lambda_n s^{\theta-1} -f(x,s) \leq c_{11}
	\end{align}
	for a.\,a.\,$x\in\Omega$, for all $s\geq 0$ and for all $n\in\N$.

	Choosing $h =u_n \in \Wpzero{p}$ in \eqref{40} and using \eqref{39} and \eqref{41}, results in
	\begin{align*}
		\l\|u_n\r\|^p \leq c_{12} \l\|u_n\r\|\quad\text{for some }c_{12}>0 \text{ and for all }n\in\N.
	\end{align*}
	Therefore, $\{u_n\}_{n\in\N}\subseteq \Wpzero{p}$ is bounded. 
	
	So, we may assume that 
	\begin{align}\label{42}
		u_n\weak u_*\quad\text{in }\Wpzero{p}\quad\text{and}\quad u_n\to u_* \quad\text{in }\Lp{r}.
	\end{align}
	Taking $h=u_n-u_*\in\Wpzero{p}$ as test function in \eqref{40}, passing to the limit as $n\to\infty$ and using \eqref{42} yields
	\begin{align*}
		\limsup_{n\to\infty} \l\lan A_p\l(u_n\r),u_n-u_*\r\ran \leq 0,
	\end{align*}
	see the proof of Proposition \ref{proposition_2}. Then, from Proposition \ref{proposition_1} we conclude that
	\begin{align}\label{43}
		u_n\to u_*\quad\text{in }\Wpzero{p}.
	\end{align}
	Now we can apply \eqref{43} along with \eqref{39}  as well as \eqref{40}, as in the proof of Proposition \ref{proposition_2}, in the limit as $n\to\infty$, we obtain
	\begin{align*}
		\overline{u}_{\lambda_*}\leq u_*
	\end{align*}
	and
	\begin{align*}
		\l\lan A_p\l(u_*\r),h\r\ran+\l\lan A_q\l(u_*\r),h\r\ran =\into \l[\lambda_* \l(u_*^{-\eta}+u_*^{\theta-1}\r)f\l(x,u_*\r)\r]h\,dx
	\end{align*}
	for all $h \in\Wpzero{p}$. Finally, we reach $u_* \in \mathcal{S}_{\lambda_*}\subseteq \interior$ and so $\lambda_* \in \mathcal{L}$.
\end{proof}

So, we have $\mathcal{L}=[\lambda_*,+\infty)$ and we can state the following theorem for the positive solutions of problem \eqref{problem}.

\begin{theorem}
	If hypotheses H' hold, then there exists $\lambda_*>0$ such that
	\begin{enumerate}
		\item
			for every $\lambda \geq \lambda_*$, problem \eqref{problem} has a positive solution $u_\lambda \in \interior$;
		\item
			for every $\lambda \in (0,\lambda_*)$, problem \eqref{problem} has no positive solutions.
	\end{enumerate}
\end{theorem}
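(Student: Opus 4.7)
The plan is simply to assemble the building blocks established in the preceding propositions, since each ingredient of the theorem has already been verified separately; the role of the final statement is merely to package them.

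First, I would set $\lambda_* = \inf \mathcal{L}$ as in the discussion just above the theorem. From the nonemptiness proposition we know $\mathcal{L} \neq \emptyset$ (indeed, the proof there shows $(\lambda_0,+\infty)\subseteq \mathcal{L}$), so $\lambda_*$ is well defined and finite. The positivity $\lambda_* > 0$ is exactly the content of the proposition asserting $\lambda_* > 0$ under hypotheses H': there we exhibited a threshold $\hat{\lambda}$ coming from H'(ii), (iii) such that no $\lambda \in (0,\hat{\lambda})$ can belong to $\mathcal{L}$, which yields $\lambda_* \geq \hat{\lambda} > 0$.

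Next, Proposition \ref{proposition_6} shows that $\mathcal{L}$ is upward closed, and the proposition establishing $\lambda_* \in \mathcal{L}$ (via the limiting argument on a sequence $\lambda_n \searrow \lambda_*$, using the a priori bound from H'(i), (ii), (iii), the $(\Ss)_+$-property from Proposition \ref{proposition_1}, and the lower barrier $\overline{u}_{\lambda_*}$ from Proposition \ref{proposition_2}) gives the left endpoint. Combining these two facts yields $\mathcal{L} = [\lambda_*, +\infty)$, which proves claim (1) after invoking Proposition \ref{proposition_4} to upgrade any $u_\lambda \in \mathcal{S}_\lambda$ to an element of $\interior$.

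Claim (2) is essentially by definition: if $\lambda \in (0,\lambda_*)$, then $\lambda < \inf \mathcal{L}$, so $\lambda \notin \mathcal{L}$, which by the very definition of $\mathcal{L}$ means problem \eqref{problem} admits no positive solution. There is no real obstacle in this proof since the heavy lifting was done in the earlier propositions; the only point requiring a little care is making sure the regularity conclusion $u_\lambda \in \interior$ in (1) is applied to \emph{every} $\lambda \geq \lambda_*$ (including $\lambda_*$ itself), which is exactly what Proposition \ref{proposition_4} provides.
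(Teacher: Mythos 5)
Your proposal is correct and matches the paper exactly: the theorem is stated as the packaging of the earlier results, with $\lambda_*=\inf\mathcal{L}$, nonemptiness and positivity of $\lambda_*$, upward closedness of $\mathcal{L}$, admissibility $\lambda_*\in\mathcal{L}$, and the regularity $\mathcal{S}_\lambda\subseteq\interior$ from Proposition \ref{proposition_4}, yielding $\mathcal{L}=[\lambda_*,+\infty)$. No gaps; this is the same route the paper takes.
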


\section{title}

If any of the sections are not relevant to your manuscript, please include the heading and write 'Not applicable' for that section.

\end{document}